\newtheorem{prethm}{{\bf Theorem}}
\newenvironment{theorem}{\begin{prethm}{\hspace{-0.5
em}{\bf.}}}{\end{prethm}}
\newtheorem{preobs}{{\bf Observation}}
\newenvironment{obs}{\begin{preobs}{\hspace{-0.5
em}{\bf.}}}{\end{preobs}}
\newtheorem{preex}{{\bf Example}}
\newtheorem{prelem}{{\bf Theorem}}
\newenvironment{lem}{\begin{prelem}{\hspace{-0.5
em}{\bf.}}}{\end{prelem}}
\newtheorem{prepro}{{\bf Proposition}}
\newenvironment{pro}{\begin{prepro}{\hspace{-0.5
em}{\bf.}}}{\end{prepro}}
\newtheorem{preprop}{{\bf Proposition}}
\newtheorem{preques}{{\bf Question}}
\newtheorem{precor}{{\bf Corollary}}
\newenvironment{cor}{\begin{precor}{\hspace{-0.5
em}{\bf.}}}{\end{precor}}
\newtheorem{prefact}{{\bf Fact}}
\newenvironment{fact}{\begin{prefact}{\hspace{-0.5
em}{\bf.}}}{\end{prefact}}
\newtheorem{prelemma}{{\bf Lemma}}
\newenvironment{lemma}{\begin{prelemma}{\hspace{-0.5
em}{\bf.}}}{\end{prelemma}}
\newtheorem{presol}{{\bf Solution}}
\newtheorem{preproof}{{\bf Proof.}}
\newenvironment{proof}[1]{\begin{preproof}{\rm
               #1}\hfill{$\rule{2mm}{2mm}$}}{\end{preproof}}
\begin{document}
\title{
{\Large{\bf Characterization of $\bf n$-Vertex Graphs with Metric
Dimension $\bf n-3$}}}
%

{\small
\author{
{\sc Mohsen Jannesari} and {\sc Behnaz Omoomi  }\\
[1mm]
{\small \it  Department of Mathematical Sciences}\\
{\small \it  Isfahan University of Technology} \\
{\small \it 84156-83111, \ Isfahan, Iran}}




 \maketitle \baselineskip15truept

\begin{abstract}
For an ordered set $W=\{w_1,w_2,\ldots,w_k\}$ of vertices and a
vertex $v$ in a connected graph $G$, the ordered $k$-vector
$r(v|W):=(d(v,w_1),d(v,w_2),\ldots,d(v,w_k))$ is  called  the
(metric) representation of $v$ with respect to $W$, where $d(x,y)$
is the distance between the vertices $x$ and $y$. The set $W$ is
called  a resolving set for $G$ if distinct vertices of $G$ have
distinct representations with respect to $W$. The minimum
cardinality of a resolving set for $G$ is its metric dimension. In
this paper, we characterize all graphs of order $n$ with metric
dimension  $n-3$.
\end{abstract}

{\bf Keywords:}  Resolving set, Basis, Metric dimension, Locating
set.

\section{Introduction}

Throughout this paper $G=(V,E)$ is a finite, simple, and connected
graph of order $n(G)$. The distance between two vertices $u$ and
$v$, denoted by $d_G(u,v)$, is the length of a shortest path
between $u$ and $v$ in $G$. We write it simply $d(u,v)$ when no
confusion can arise. Also, the diameter of $G$,
$max_{\{u,v\}\subseteq V(G)}d(u,v)$, is denoted by $diam(G)$. The
vertices of a connected graph can be represented by different
ways,  for example, the vectors which theirs components are the
distances between the vertex and the vertices in a given subset of
vertices.

For an ordered set $W=\{w_1,w_2,\ldots,w_k\}\subseteq V(G)$ and a
vertex $v$ of $G$, the  $k$-vector
$$r(v|W):=(d(v,w_1),d(v,w_2),\ldots,d(v,w_k))$$
is called  the  (metric) {\it representation}  of $v$ with respect
to $W$. The set $W$ is called a {\it resolving set} (locating set)
for $G$ if distinct vertices have different representations. In
this case we say the set $W$ {\it resolves} $G$. Elements in a
resolving set are called {\it landmarks}. A resolving set $W$ for
$G$ with minimum cardinality is  called a {\it basis} of $G$, and
its cardinality is the {\it metric dimension} of $G$, denoted by
$\beta(G)$.

The concept of (metric) representation is introduced by
Slater~\cite{Slater1975} (see~\cite{Harary}). For more results in
this concept see~\cite{cameron,trees,cartesian
product,bounds,sur1,Discrepancies}. \\
 It is obvious that for every graph $G$ of order $n$, $1\leq \beta(G)\leq
 n-1$. Yushmanov~\cite{Yushmanov} improved this bound to $\beta(G)\leq
 n-diam(G)$.
  Khuller et al.~\cite{landmarks} and Chartrand et al.~\cite{Ollerman}
  independently proved that $\beta(G)=1$ if and only
if $G$ is a path. Also, all graphs with metric dimension two are
characterized   by Sudhakara and Hemanth Kumar~\cite{dim=2}.
 Chartrand et al.~\cite{Ollerman} proved that the
only graph of order $n$, $n\geq 2$,  with metric dimension $n-1$
is the complete graph $K_n$. They also provided a
 characterization of  graphs of order $n$ and
metric dimension $n-2$.   In~\cite{idea} the problem of
characterization of all graphs of order $n$ with metric dimension
$n-3$ is proposed.  In this paper, we answer to this question and
provide a characterization of graphs with metric dimension $n-3$.
First in next section, we present some definitions and known
results which are necessary to prove our main theorem.

\section{Preliminaries}
In this section, we present some definitions and known results
which are necessary  to prove our main theorem. The notations
$u\sim v$ and $u\nsim v$ denote the adjacency and none-adjacency
between vertices $u$ and $v$, respectively. An edge with end
vertices $u$ and $v$ is denoted by $uv$. A path of order $n$,
$P_n$, and a cycle of order $n$, $C_n$, are denoted by
$(v_1,v_2,\ldots, v_n)$ and $(v_1,v_2,\ldots,v_n,v_1)$,
respectively.
\par We say an ordered set
$W$ {\it resolves} a set $T$ of vertices in $G$, if the
representations of vertices in $T$ are distinct with respect to
$W$. When $W=\{x\}$, we say that the vertex $x$ resolves $T$. To
see that whether a given set $W$ is a resolving set for $G$, it is
sufficient to look at the representations of vertices in
$V(G)\backslash W$, because
$w\in W$ is the only vertex of $G$ for which $d(w,w)=0$.\\

In~\cite{Ollerman} all graphs of order $n$ with metric dimension
 $n-2$ are characterized as follows.
\begin{lem} {\rm\cite{Ollerman}} \label{n-2}
Let $G$ be a  graph of order $n\geq 4$. Then $\beta(G)=n-2$ if and
only if $G=K_{s,t}~(s,t\geq 1), G=K_s\vee\overline K_t~(s\geq 1,
t\geq 2)$, or $G=K_s\vee (K_t\cup K_1)~ (s,t\geq 1)$.
\end{lem}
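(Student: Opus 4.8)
The plan is to establish both implications, with the ``only if'' direction carrying the weight. For ``if'', I would lean on twins: call $u,v$ \emph{twins} when $d(u,x)=d(v,x)$ for all $x\notin\{u,v\}$; this is an equivalence relation whose classes are each a clique or an independent set, and since two twins can never both lie outside a resolving set, $\beta(G)\ge n-k$ where $k$ is the number of twin classes. Each of $K_{s,t}$ and $K_s\vee\overline{K_t}$ ($t\ge2$) has exactly two twin classes, so $\beta\ge n-2$, while deleting one vertex from each class gives a resolving set of size $n-2$; each $K_s\vee(K_t\cup K_1)$ ($t\ge2$) has three twin classes ($K_s$, $K_t$, and the lone vertex $z$), but deleting one vertex $a\in K_s$, one vertex $b\in K_t$, and $z$ leaves $a$ and $b$ with the same all-ones representation, so this $(n-3)$-set fails to resolve, whereas putting $z$ back repairs it --- hence $\beta=n-2$ in all three families (and $K_s\vee(K_t\cup K_1)$ with $t=1$ is already a member of the second family).

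For ``only if'', assume $\beta(G)=n-2$. Yushmanov's bound $\beta(G)\le n-diam(G)$ \cite{Yushmanov} forces $diam(G)\le2$, and $diam(G)=1$ would give $G=K_n$ with $\beta=n-1$; so $diam(G)=2$. Since every distance is now $1$ or $2$, I would work with the criterion that a set $W$ resolves $G$ precisely when, for every two vertices outside $W$, some vertex of $W$ is adjacent to exactly one of them. Let $V_1,\dots,V_k$ be the twin classes with chosen representatives $r_i\in V_i$; then $k\ge2$, and between any two twin classes either all or no edges occur (the neighbourhood of one class inside another is constant). If $k=2$, connectivity excludes the ``no edges'' alternative, so $G$ is the join of its two twin classes, each a clique or an independent set, giving $G\in\{K_n,K_{s,t},K_s\vee\overline{K_t}\}$; discarding $K_n$ leaves the first two families.

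The case $k\ge3$ is the crux. First I would show $k=3$: if $k\ge4$, the forced set $\bigcup_i(V_i\setminus\{r_i\})$ of size $n-k$ can be enlarged to a resolving set of size $n-3$ by adjoining $k-3$ representatives, choosing the three omitted classes so that each pair of their representatives is separated; such a pair fails to be separated only when the third omitted class is a singleton whose representative is their unique separator, and one argues this obstruction cannot persist for all admissible choices once $k\ge4$ --- applying the resolving criterion to triples of representatives shows that among any three twin classes at least one is a singleton, hence $G$ has at most two non-singleton classes, after which a short finite check rules out $k\ge4$. Then, with $k=3$, the forced set has size exactly $n-3$ and so must fail to resolve, meaning two representatives, say $r_1$ and $r_2$, have equal representation; since $r_1\not\equiv r_2$, this forces the third class to be a singleton $\{z\}$, forces $V_1$ and $V_2$ to be cliques with $r_1\sim r_2$ (hence completely joined, so $V_1\cup V_2$ is a clique), and forces $z$ to be joined to all of one of $V_1,V_2$ and to none of the other --- that is, $G=K_s\vee(K_t\cup K_1)$, with $s=1$ when one of $V_1,V_2$ is itself a singleton. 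I expect this $k\ge3$ analysis to be the main obstacle: ruling out $k\ge4$ and then narrowing the adjacency pattern for $k=3$ without overlooking a sporadic configuration needs careful bookkeeping, and casting everything in terms of neighbourhoods --- legitimate because $diam(G)=2$ --- is what keeps it tractable.
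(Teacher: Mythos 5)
This statement is not proved in the paper at all: it is quoted as Theorem~A from Chartrand et al.\ \cite{Ollerman}, so there is no in-paper argument to measure yours against. Judged on its own, your outline is correct and, fittingly, runs on exactly the machinery this paper develops for its main theorem (twin classes, Observation~\ref{twins}, the bound $\beta(G)\ge n-n(G^*)$ of Proposition~\ref{B(G)>n-n*}, and the reduction of resolution to neighbourhoods once $diam(G)=2$). Two remarks on the places you leave implicit. In the ``if'' direction for $K_s\vee(K_t\cup K_1)$ you check that one particular $(n-3)$-set fails; add the one-line observation that, by the twin property, any resolving $(n-3)$-set would have to omit exactly one vertex from each of the three twin classes, and all such choices are equivalent under twin swaps, so one check suffices. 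In the ``only if'' direction the $k=2$ case is complete, and the crux you flag does close as predicted: for any three classes, the failure of $V\setminus\{r_i,r_j,r_l\}$ forces the separating class to be a singleton that is the \emph{unique} separator of the other two representatives; encoding these conditions for the triples inside any four classes as XOR/equality constraints on the six adjacency indicators of the quotient graph leads either to three pairwise-distinct Booleans (impossible) or to a quotient isomorphic to $P_4$, which violates $diam(G^*)\le diam(G)=2$ --- hence $k\le 3$. For $k=3$, you should say explicitly that the alternative ``$V_1,V_2$ both independent and not joined'' is excluded by connectivity (the class not adjacent to $z$ would be isolated), after which $K_s\vee(K_t\cup K_1)$ is forced exactly as you describe. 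So the proposal is a genuine, completable proof; its only weakness is that the elimination of $k\ge 4$ is asserted rather than executed, and that is precisely the step that needs the bookkeeping.
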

For each vertex $v\in V(G)$, let $\Gamma_i(v):=\{u\in
V(G)~|~d(u,v)=i\}$. Two distinct vertices $u,v$ are {\it twins} if
$\Gamma_1(v)\backslash\{u\}=\Gamma_1(u)\backslash\{v\}$. It is
called that $u\equiv v$ if and only if $u=v$ or $u,v$ are twins.
In~\cite{extermal}, it is proved that the relation $\equiv$ is an
equivalent relation. The equivalence class of the vertex $v$ is
denoted by $v^*$. The {\it twin graph} of $G$, denoted by $G^*$,
is the graph with vertex set $V(G^*):=\{v^*~|~v\in V(G)\}$, where
$u^*v^*\in E(G^*)$ if and only if $uv\in E(G)$. It is easy to
prove that $u,v$ are adjacent in $G$ if and only if all vertices
of $u^*$ are adjacent to all vertices of $v^*$, hence the
definition of $G^*$ is well defined. For each subset $S\subseteq
V(G)$, let $S^*$ denote the set $\{v^*\in V(G^*)~|~v^*\subseteq
S\}$. Also, by $\Gamma_i^*(v^*)$, we mean the set $\{u^*\in
V(G^*)~|~d_{G^*}(u^*,v^*)=i\}$. For each $v\in v^*$, it is
immediate that $\Gamma^*_i(v^*)=\Gamma^*_i(v)$ if $i\neq 0$.
Furthermore, we define \begin{description} \item$R_1(v):=\{x\in
\Gamma_1(v)~|~\exists y\in \Gamma_2(v):x\sim y\}$, \item
$R_2(v):=\Gamma_1(v)\backslash R_1(v)$,\item $R^*_1(v^*):=\{x^*\in
\Gamma^*_1(v^*)~|~\exists y^*\in \Gamma^*_2(v^*):x^*\sim
y^*\}$,\item $R^*_2(v^*):=\Gamma^*_1(v^*)\backslash R^*_1(v^*)$.
\end{description}

\par We say a set $S$ of vertices is  {\it homogeneous}
if the induced subgraph  by $S$ in  $G$, $G[S]$, is a complete or
an empty subgraph of $G$. In this terminology, it is proved
in~\cite{extermal} that each vertex of $G^*$ is a homogeneous
subset of $V(G)$.
\begin{obs} {\rm\cite{extermal}}\label{twins}
If the vertices $u,v$ are twins in a graph $G$ and $S$ resolves
$G$, then $u$ or $v$ is in $S$. Moreover, if $u\in S$ and $v\notin
S$, Then $(S\setminus\{u\})\cup \{v\}$ also resolves $G$.
\end{obs}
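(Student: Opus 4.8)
The plan is to reduce the whole statement to a single structural fact: if $u$ and $v$ are twins in $G$, then the transposition $\tau=(u\,v)$ that swaps $u$ and $v$ and fixes every other vertex is an automorphism of $G$. Granting this, both assertions become routine, because automorphisms preserve distances and hence carry resolving sets to resolving sets.

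First I would verify that $\tau$ is an automorphism. Since $\tau$ is a bijection on $V(G)$, it suffices to check $ab\in E(G)$ iff $\tau(a)\tau(b)\in E(G)$. This is immediate when $\{a,b\}\cap\{u,v\}=\emptyset$ (as $\tau$ fixes both) and when $\{a,b\}=\{u,v\}$ (both sides equal $uv\in E(G)$). The only remaining case is, say, $a=u$ and $b\notin\{u,v\}$: then $\tau(a)\tau(b)=vb$, and $ub\in E(G)$ iff $b\in\Gamma_1(u)\setminus\{v\}$, which by the twin hypothesis $\Gamma_1(u)\setminus\{v\}=\Gamma_1(v)\setminus\{u\}$ is equivalent to $b\in\Gamma_1(v)\setminus\{u\}$, i.e. $vb\in E(G)$. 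So $\tau$ is an automorphism, and consequently $d(\tau(x),\tau(y))=d(x,y)$ for all $x,y$, since an automorphism sends shortest paths to shortest paths. Taking $y=w$ with $w\notin\{u,v\}$ yields the key equidistance property $d(u,w)=d(\tau(u),\tau(w))=d(v,w)$.

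For the first claim, suppose $S$ resolves $G$ but $u,v\notin S$. Every $s\in S$ satisfies $s\notin\{u,v\}$, so $d(u,s)=d(v,s)$ by the equidistance property; hence $r(u|S)=r(v|S)$ while $u\neq v$, contradicting that $S$ resolves $G$. Therefore $u\in S$ or $v\in S$. For the second claim, assume $u\in S$, $v\notin S$, and set $S'=(S\setminus\{u\})\cup\{v\}=\tau(S)$. I would invoke the general principle that an automorphism maps resolving sets to resolving sets: given distinct $a,b\in V(G)$, the vertices $\tau^{-1}(a),\tau^{-1}(b)$ are distinct, so some $s\in S$ has $d(\tau^{-1}(a),s)\neq d(\tau^{-1}(b),s)$; applying $\tau$ gives $d(a,\tau(s))\neq d(b,\tau(s))$ with $\tau(s)\in\tau(S)=S'$. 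Thus $S'$ resolves $G$.

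All steps are short; the only point needing care is the case analysis in the second claim if one argues directly with the equidistance property rather than with $\tau$ — the naive statement ``$d(a,u)=d(a,v)$ for all $a$'' fails when $a\in\{u,v\}$, so one would have to treat separately the cases $a$ or $b$ equal to $u$ or $v$. Routing everything through $\tau$ avoids this and also makes the adjacent-twin case $uv\in E(G)$ harmless, so I expect no real obstacle beyond checking carefully that $\tau$ is edge-preserving in the mixed case.
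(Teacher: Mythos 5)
Your proof is correct. Note that the paper itself offers no proof of this observation --- it is quoted from Hernando et al.~\cite{extermal} as a known fact --- so there is nothing internal to compare against; judged on its own, your argument is complete. The route you take (showing the transposition $\tau=(u\,v)$ is an automorphism, then using that automorphisms preserve distances and carry resolving sets to resolving sets) is a slightly more structured packaging of the usual direct argument, which rests only on the equidistance property $d(u,w)=d(v,w)$ for $w\notin\{u,v\}$. Your version buys a genuinely cleaner treatment of the second claim: as you correctly point out, pairs $\{a,b\}$ meeting $\{u,v\}$ would otherwise require a separate case analysis, and pushing everything through $\tau$ disposes of those cases uniformly. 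The case check that $\tau$ preserves edges is also handled correctly, including the mixed case via the twin condition $\Gamma_1(u)\setminus\{v\}=\Gamma_1(v)\setminus\{u\}$. No gaps.
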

\begin{pro} {\rm\cite{extermal}}\label{diam G^*}
Let $G\neq K_1$ be a  graph. Then $diam(G^*)\leq diam(G).$
Moreover, if $u,v\in V(G)$ are not twin vertices of $G$, then
$d_{G^*}(u^*,v^*)=d_G(u,v)$.
\end{pro}

As in ~\cite{extermal}, we say that $v^*\in V(G^*)$ is of type:
\begin{description}
\item $\bullet$ (1) if $|v^*|=1$, \item$\bullet$ (K) if
$G[v^*]\cong K_r$ and $r\geq 2$, \item $\bullet$ (N) if
$G[v^*]\cong \overline{K_r}$ and $r\geq 2$.
\end{description}
A vertex of $G^*$ is of type (1K) if it is of type (1) or (K). A
vertex is of type (1N) if it is of type (1) or (N). A vertex is of
type (KN) if it is of type (K) or (N).  We denote by $\alpha(G^*)$
the number of vertices of $G^*$ of type (K) or (N). It is obvious
that $G$ is uniquely determined by $G^*$, and the type and
cardinality of each vertex of $G^*$.

\par Hernando et al.~\cite{extermal} characterized all
graphs of order $n$, diameter $d$ and metric dimension $n-d$ by
the following theorem.
\begin{lem} {\rm\cite{extermal}}\label{n-d}
Let $G$ be a  graph of order $n$ and diameter $d\geq 3$. Let $G^*$
be the twin graph of $G$. Then $\beta(G)=n-d$ if and only if $G^*$
is one of the following graphs:
\begin{description}
\item 1. $G^*\cong P_{d+1}$ and one of the following cases holds\\
(a) $\alpha(G^*)\leq 1$;\\ (b) $\alpha(G^*)=2$, the two vertices
of $G^*$ not of type (1) are adjacent, and if one is a leaf of
type (K), then the other is also of type (K);\\
(c) $\alpha(G^*)=2$, the two vertices of $G^*$ not of type (1) are
at distance $2$ and both are of type (N); or\\
(d) $\alpha(G^*)=3$ and there is a vertex of type (KN) adjacent to
two vertices of type (N). \item 2. $G^*\cong P_{d+1,k}$ {\rm(}the
path $(u^*_0,u^*_1,\ldots,u^*_d\rm)$ with one extra vertex
adjacent to $u^*_{k-1}${\rm)} for some integer $k\in [3,d-1]$, the
degree-$3$ vertex $u^*_{k-1}$ of $G^*$ is of any type, each
neighbor of $u^*_{k-1}$ is of type (1N), and every other vertex is
of type (1). \item 3. $G^*\cong P^\prime_{d+1,k}$ {\rm(}the path
$(u^*_0,u^*_1,\ldots,u^*_d)$ with one extra vertex adjacent to
$u^*_{k-1}$ and $u^*_k${\rm)} for some integer $k\in [2,d-1]$, the
three vertices in the cycle are of type (1K), and every other
vertex is of type (1).
\end{description}
\end{lem}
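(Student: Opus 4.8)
The plan is to move the whole question into the twin graph $G^*$ and then carry out a structural case analysis, using Proposition~\ref{diam G^*} and Observation~\ref{twins} as the main tools. Write $m:=|V(G^*)|$. Two elementary facts set the stage. First, each non-singleton twin class $v^*$ forces at least $|v^*|-1$ landmarks into every resolving set (Observation~\ref{twins}), so summing over classes gives $\beta(G)\ge n-m$. Second, replacing a vertex of $G^*$ by a clique or an independent set never creates a distance exceeding $\max(diam(G^*),2)$, so $diam(G)\le\max(diam(G^*),2)$; since $diam(G)=d\ge 3$ and $diam(G^*)\le diam(G)$ by Proposition~\ref{diam G^*}, this forces $diam(G^*)=d$, whence $m\ge d+1$ because a connected graph of diameter $d$ has at least $d+1$ vertices. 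Combining these with the bound $\beta(G)\le n-diam(G)=n-d$ of Yushmanov~\cite{Yushmanov}, we always have $n-m\le\beta(G)\le n-d$, and the task is to decide exactly when the equality $\beta(G)=n-d$ holds.

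The core of the argument is to show, assuming $\beta(G)=n-d$, that $m\le d+2$ and, more precisely, that $G^*$ is either the path $P_{d+1}$, or a path $(u_0^*,\dots,u_d^*)$ carrying one extra vertex — attached to a single internal vertex $u_{k-1}^*$ (the graph $P_{d+1,k}$), or forming a triangle with a path edge $u_{k-1}^*u_k^*$ (the graph $P^\prime_{d+1,k}$). I would fix a geodesic $u_0^*,\dots,u_d^*$ of $G^*$ realising its diameter, note that any vertex $w^*$ off this geodesic is pinned — through its distances to $u_0^*$ and $u_d^*$ — to a position adjacent to the path, and then show that too much extra structure is incompatible with $\beta(G)=n-d$: taking the $n-m$ forced twin-landmarks together with a bounded number of well-chosen geodesic vertices, one verifies by a direct computation of representations that if $G^*$ has a second off-path vertex, or an off-path vertex adjacent to two non-consecutive path vertices, or one at distance at least $2$ from the path, then this set — of size strictly less than $n-d$ — already resolves $G$, a contradiction. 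The same mechanism yields the range restrictions $k\in[3,d-1]$ and $k\in[2,d-1]$ on the attachment point, since attaching the extra vertex too near an end of the path again makes a landmark redundant. I expect this step to be the main obstacle, since it requires controlling $G$-distances, $G^*$-distances and the twin relation simultaneously while tracking representations.

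It then remains, for each of the three admissible shapes of $G^*$, to determine which assignments of types (1), (K), (N) and which cardinalities to its vertices give $\beta(G)=n-d$ rather than something strictly smaller — recall that $G$ is completely determined by $G^*$ together with these types and cardinalities. Since the $n-m$ twin-landmarks are forced, this reduces to a bounded computation: given them, how many further vertices along the path are needed to separate all representations, and does a leaf being of type (K) instead of (N), or two non-singleton vertices being adjacent rather than at distance $2$, or a degree-$3$ branch vertex having a type-(K) neighbour, create a collision that a single extra landmark cannot repair — or, conversely, render an extra landmark unnecessary. Running through these finitely many checks should produce exactly conditions 1(a)--(d) (for $G^*\cong P_{d+1}$, governed by $\alpha(G^*)\le 3$ and the positions and types of the non-singleton vertices), condition 2 (for $G^*\cong P_{d+1,k}$), and condition 3 (for $G^*\cong P^\prime_{d+1,k}$). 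For the converse direction one checks, for each listed configuration, that $diam(G)=d$ (so $\beta(G)\le n-d$), exhibits an explicit resolving set of size $n-d$ — the forced twin-landmarks together with $m-d\in\{1,2\}$ suitable path vertices — and uses $\beta(G)\ge n-m$ together with a short representation computation to verify that nothing smaller resolves $G$; the stated type conditions are precisely the thresholds at which this last verification flips.
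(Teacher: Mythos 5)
First, a point of reference: the paper does not prove this statement at all. It is Theorem~A, quoted verbatim from Hernando, Mora, Pelayo, Seara and Wood \cite{extermal}, where the proof occupies a substantial part of that paper. So there is no in-paper argument to compare you against; your proposal has to stand on its own. Its skeleton is in fact the right one and matches the strategy of \cite{extermal}: pass to $G^*$, use Observation~\ref{twins} to force $n-n(G^*)$ landmarks and get $n-n(G^*)\leq\beta(G)\leq n-d$, deduce $diam(G^*)=d$ from Proposition~\ref{diam G^*} (your observation that $diam(G)\leq\max(diam(G^*),2)$ and $d\geq 3$ force equality is correct), fix a diametral geodesic in $G^*$, and then classify.

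The genuine gap is that the classification, which is the entire content of the theorem, is asserted rather than carried out. ``One verifies by a direct computation of representations'' and ``running through these finitely many checks'' hide an unbounded case analysis: the path has length $d$ with $d$ arbitrary, the non-singleton twin classes can sit at any positions along it, and the whole point of conditions 1(a)--(d), 2 and 3 is a delicate interaction between position, adjacency and type. For instance, why a leaf of type (K) forces its neighbour to be of type (K), why two non-adjacent non-singleton classes must both be of type (N) and at distance exactly $2$, and why the neighbours of the branch vertex in case 2 must be of type (1N), all come from checking whether the single vertex omitted from each twin class can be separated from the omitted vertices of \emph{neighbouring} classes by the remaining path landmarks -- this is exactly where collisions between adjacent (K)/(N) classes occur, and your proposal never engages with it. Two smaller inaccuracies: a vertex off the geodesic is not ``pinned to a position adjacent to the path'' by its distances to the endpoints alone (that needs the dimension hypothesis, which you only invoke afterwards); and the ranges $k\in[3,d-1]$ and $k\in[2,d-1]$ are not consequences of ``a landmark becoming redundant'' -- they come from the diameter constraint and from the fact that a twin graph contains no twins (e.g.\ for $k=2$ in case 2 the extra vertex would be a twin of $u_0^*$ in $G^*$, which is impossible). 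As it stands the proposal is a plausible roadmap, not a proof.
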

To prove our main theorem, we need the following propositions.
\begin{pro}\label{sugraph}
Let  $H$ be a subgraph of a graph $G$. If $\beta(H)=n(H)-t$ and
$d_H(u,v)=d_G(u,v)$ for all pairs of vertices in $H$, then
$\beta(G)\leq n(G)-t$.
\end{pro}
\begin{proof} {Let $W$ be a basis of $H$. Since $d_H(u,v)=d_G(u,v)$ for
all pairs of vertices in $H$, $W$ as a subset of $V(G)$ resolves
all vertices of $H$. Hence, $T=W\cup (V(G)\backslash V(H))$ is a
resolving set for $G$ where, $|T|=n(G)-t$. Therefore, the metric
dimension of $G$ is at most $n(G)-t$. }\end{proof}
\begin{cor}\label{sugraph d=2}
If $H$ is an induced subgraph of a graph $G$, where $diam(H)=2$,
and $\beta(H)=n(H)-t$ for some positive integer $t$, then
$\beta(G)\leq n(G)-t$.
\end{cor}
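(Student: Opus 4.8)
The plan is to deduce this corollary directly from Proposition~\ref{sugraph}, so the work reduces to checking that the hypotheses of that proposition are met. First I would observe that the only extra assumption in Proposition~\ref{sugraph} beyond ``$H$ is a subgraph'' is the isometry condition $d_H(u,v)=d_G(u,v)$ for all pairs $u,v$ in $H$; everything else (the conclusion $\beta(G)\leq n(G)-t$ given $\beta(H)=n(H)-t$) is then immediate. So the entire content of the corollary is: an induced subgraph of diameter $2$ is automatically isometric in $G$.

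Next I would verify that isometry claim. Let $u,v\in V(H)$. Since $H$ is a subgraph of $G$, any $u$--$v$ path in $H$ is also a path in $G$, giving $d_G(u,v)\leq d_H(u,v)$. For the reverse inequality, split into cases on $d_H(u,v)$. If $u=v$ both distances are $0$. If $d_H(u,v)=1$, then $uv\in E(H)\subseteq E(G)$, so $d_G(u,v)=1=d_H(u,v)$. If $d_H(u,v)=2$ (the only remaining case, since $\mathrm{diam}(H)=2$), then $u\nsim v$ in $H$; because $H$ is an \emph{induced} subgraph, $u\nsim v$ in $G$ as well, so $d_G(u,v)\geq 2$, and combined with $d_G(u,v)\leq d_H(u,v)=2$ we get equality. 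Hence $d_H(u,v)=d_G(u,v)$ for every pair in $H$.

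Finally I would invoke Proposition~\ref{sugraph} with this $H$ and the given $t$: its hypotheses $\beta(H)=n(H)-t$ and the isometry condition both hold, so $\beta(G)\leq n(G)-t$, which is the desired conclusion. I do not expect any serious obstacle here; the one point that genuinely needs the ``induced'' hypothesis (as opposed to merely ``subgraph'') is the step handling distance-$2$ pairs, where non-adjacency in $H$ must be transferred to non-adjacency in $G$, and the one point that needs $\mathrm{diam}(H)=2$ is that these are the only cases to consider. It is worth noting in the write-up that positivity of $t$ plays no role in the argument and is only there to make the statement meaningful.
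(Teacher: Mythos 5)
Your proof is correct and follows exactly the route the paper intends: the paper states this corollary without proof as an immediate consequence of Proposition~\ref{sugraph}, and your verification that an induced subgraph of diameter $2$ is isometric in $G$ (non-adjacency transfers because the subgraph is induced, and $\mathrm{diam}(H)=2$ caps the distances to be checked) is precisely the missing step. Nothing to add.
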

\begin{cor}\label{sugraphH<K<G}
Let $H$ be an induced subgraph of a graph $G$, and $G$ be an
induced subgraph of a graph $R$, where $diam(H)=diam(G)=2$,
$\beta(R)=n(R)-t$, and $\beta(H)=n(H)-t$ for some positive integer
$t$. Then $\beta(G)=n(G)-t$.
\end{cor}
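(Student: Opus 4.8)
The plan is to deduce Corollary~\ref{sugraphH<K<G} by sandwiching $\beta(G)$ between two bounds, each obtained from one of the two hypotheses. First I would establish the upper bound $\beta(G)\le n(G)-t$. Since $H$ is an induced subgraph of $G$ with $\mathrm{diam}(H)=2$, every pair of vertices of $H$ is at distance $1$ or $2$ in $H$, and adjacency is inherited from $G$, so $d_H(u,v)=d_G(u,v)$ for all $u,v\in V(H)$. Thus the hypothesis $\beta(H)=n(H)-t$ together with Proposition~\ref{sugraph} (equivalently, directly Corollary~\ref{sugraph d=2}) gives $\beta(G)\le n(G)-t$.

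For the reverse inequality $\beta(G)\ge n(G)-t$, I would apply the same reasoning one level up, using that $G$ is an induced subgraph of $R$ with $\mathrm{diam}(G)=2$: again $d_G(u,v)=d_R(u,v)$ for all $u,v\in V(G)$, so by Proposition~\ref{sugraph} with $H:=G$ and the ambient graph $R$, if $\beta(G)=n(G)-s$ then $\beta(R)\le n(R)-s$. Combined with the hypothesis $\beta(R)=n(R)-t$, this yields $n(R)-s\ge n(R)-t$, i.e.\ $s\le t$, which is exactly $\beta(G)=n(G)-s\ge n(G)-t$. Putting the two bounds together gives $\beta(G)=n(G)-t$.

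I expect the argument to be essentially immediate once the two applications of Proposition~\ref{sugraph} are set up correctly; the only point requiring a line of justification is the distance-preservation condition, and the role of the diameter-$2$ hypothesis is precisely to supply it, since in a graph of diameter $2$ the distance between any two distinct vertices is determined by adjacency alone and hence is the same whether computed inside the induced subgraph or in the larger graph. There is no real obstacle here: the corollary is a bookkeeping consequence of Proposition~\ref{sugraph} applied twice, once to $(H,G)$ and once to $(G,R)$. The only mild subtlety to state clearly is that we may write $\beta(G)=n(G)-s$ for a \emph{unique} integer $s$ and then show $s=t$ by proving $s\le t$ (from the $R$-side) and $s\ge t$ (from the $H$-side).
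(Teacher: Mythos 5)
Your argument is correct and is essentially identical to the paper's own proof: both write $\beta(G)=n(G)-s$ and apply Corollary~\ref{sugraph d=2} (i.e.\ Proposition~\ref{sugraph} with the diameter-$2$ distance-preservation observation) once to the pair $(H,G)$ to get $s\ge t$ and once to the pair $(G,R)$ to get $s\le t$. Nothing further is needed.
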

\begin{proof}{
Let $\beta(G)=n(G)-s$, for some positive integer $s$. By
Corollary~\ref{sugraph d=2}, we have $n(G)-s=\beta(G)\leq n(G)-t$
and $n(R)-t=\beta(R)\leq n(R)-s$. Therefore,  $s=t$. }\end{proof}
\begin{pro}\label{dim G^*<n-t}
Let $G$ be  a  graph  and $G^*$ be the twin graph of $G$. If
$\beta(G^*)=n(G^*)-t$ for some positive integer $t$, then
$\beta(G)\leq n(G)-t$.
\end{pro}
\begin{proof}{ Let $S^*$ be a basis of $G^*$ and $T^*=V(G^*)\backslash S^*$.
We choose a vertex $v$ from each $v^*\in T^*$ and let
$T=\{v~|~v^*\in T^*\}$. Since $S^*$ is a basis of $G^*$, for each
pair of vertices $u^*,v^*\in T^*$, there exist a vertex $x^*\in
S^*$ such that $d_{G^*}(x^*,u^*)\neq d_{G^*}(x^*,v^*)$. Note that
neither $u$ nor $v$ is twin with $x$, for each $x\in x^*$.
Therefore, by Proposition~\ref{diam G^*}, we have
$d_{G^*}(x^*,u^*)=d_G(x,u)$ and $d_{G^*}(x^*,v^*)=d_G(x,v)$.
Hence, $d_G(x,u)\neq d_G(x,v)$, which implies that the set
$S=\bigcup_{v^*\in S^*}v^*$ resolves $T$. Therefore,
$V(G)\setminus T$ is a resolving set for $G$ of cardinality
$n(G)-t$, thus $\beta(G)\leq n(G)-t$. }\end{proof}
\begin{pro}\label{B(G)>n-n*}
Let $G$ be  a  graph  and $G^*$ be the twin graph of $G$. Then
$\beta(G)\geq n(G)-n(G^*)$.
\end{pro}
\begin{proof}{
If $S$ resolves $G$, then Observation~\ref{twins} shows that $S$
contains at least $|v^*|-1$ vertices from each $v^*\in V(G^*)$.
Hence, $\beta(G)\geq n(G)-n(G^*)$. }\end{proof}
\section{\label{sec:sts}Main Results}
Let $G$ be a connected graph of order $n$ and metric dimension
$n-3$. Since $n-3=\beta(G)\leq n-diam(G)$, it follows that
$diam(G)\leq 3$. If $diam(G)=1$, then $G\cong K_n$, contrary to
$\beta(G)=n-3$. If $diam(G)=3$, then in Theorem~\ref{n-d}, let
$d=3$, which obtains a characterization of graphs $G$ with
$\beta(G)=n-3$, where $diam(G)=3$, (Note that in this case the
interval $[3,2]$ is empty, hence, Case 2 dose not occur).
Therefore, it is enough to consider the case $diam(G)=2$. The
following theorem is our main result, which is a characterization
of all graphs with metric dimension $n-3$ and diameter $2$.
\begin{theorem} \label{d=2}
Let $G$ be a  graph of order $n$ and diameter $2$ and $G^*$ be the
twin graph of $G$. Then $\beta(G)=n-3$ if and only if $G^*$
satisfies in one of the following structures:
\begin{description}
\item $G_1$: $G^*\cong K_3$ and has at most one vertex of type
(1K);
\item $G_2$: $G^*\cong P_{3}$ and one of the following cases holds:\\
(a) The degree-$2$ vertex is of type (N) and one of the leaves is of type (K)
and the other is of any type;\\
(b) One of the leaves is of type (K), the other is of type (KN)
and the degree-$2$ vertex is of any type; \item $G_3$: $G^*$ is
the paw {\rm(}a triangle with a pendant edge{\rm)}, and the
degree-$3$ vertex is of any type, one of the degree-$2$ vertices
is of type (N), the other is of type (1K), the leaf is of type
(1N). Moreover, a degree-$2$ vertex of type (K) yields the leaf
and the degree-$3$ vertex are not of type (N); \item $G_4$:
$G^*\cong C_5$, and each vertex is of type (1); \item $G_5$: $G^*$
is $C_5$ with a chord, and the adjacent degree-$2$ vertices are of
type (1) and the other vertices are of type (1K); \item $G_6$:
$G^*$ is a $C_5$ with two adjacent chords. The degree-$4$ vertex
is of any type, the others are of type (1K). Furthermore, two
non-adjacent vertices are not of type (K), and two adjacent
vertices are not of different types (K) and (N); \item $G_7$:
$G^*$ is a kite with a pendant edge adjacent to a degree-$3$
vertex, and the leaf is of type (1), the degree-$4$ and degree-$3$
vertices are of type (1K), one of the degree-$2$ vertices is of
type (K) and the other is of type (1). \item $G_8$: $G^*$ is the
kite, and one of the degree-$2$ vertices is of type (K) the other
is of type (1), one of the degree-$3$ vertices is of type (N), and
the other is of type (1K);\item $G_9$: $G^*\cong C_4$, and two
adjacent vertices are of type (K) and others are of type (1);\item
$G_{10}$: $G^*\cong C_4\vee K_1$, and two degree-$3$ adjacent
vertices are of type (K), degree-$4$ vertex is of type (1K), and
others are of type (1).
\end{description}
In Figure~\ref{G^*and dim=n-3} the scheme of the above $10$
structures are shown.
\end{theorem}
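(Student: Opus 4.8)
The plan is to prove both directions by reducing everything to the twin graph $G^*$ and exploiting the already-characterized case $\beta=n-2$ (Theorem~\ref{n-2}). First I would establish the basic parameter constraints. By Proposition~\ref{B(G)>n-n*}, $\beta(G)\ge n(G)-n(G^*)$, so $\beta(G)=n-3$ forces $n(G^*)\ge 3$; combined with the fact that a graph of diameter $2$ with $n(G^*)=3$ is one of $K_3,P_3$, and that $\alpha(G^*)\le 3$ (since $\beta(G)=n-3$ means few twin classes can be large — more precisely, each type-(K) or type-(N) class beyond the first three landmarks "costs" a dimension, which I would make precise via Observation~\ref{twins}), I expect to be able to bound $n(G^*)\le 5$ and $\alpha(G^*)\le 3$. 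This is the key finiteness step: there are only finitely many candidate twin graphs on at most $5$ vertices with diameter $\le 2$, and only finitely many ways to assign types to their vertices subject to $\alpha\le 3$.

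For the \emph{necessity} direction, I would argue that if $\beta(G)=n-3$ then $G^*$ has diameter $\le 2$ (Proposition~\ref{diam G^*}) and $\beta(G^*)\ge n(G^*)-3$ by Proposition~\ref{dim G^*<n-t} applied contrapositively: if $\beta(G^*)=n(G^*)-t$ with $t\ge 4$ we would get $\beta(G)\le n-4$. So $G^*$ is a graph on $3,4$, or $5$ vertices with diameter $\le 2$ and $\beta(G^*)\in\{n(G^*)-2,n(G^*)-3\}$ (it cannot be $n(G^*)-1$, as that would make $G^*$ complete and then $G$ itself would have too few twin classes unless $G^*$ has very few vertices — this edge case, $G^*\cong K_3$, is exactly $G_1$). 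Enumerating: $n(G^*)=3$ gives $K_3$ or $P_3$; $n(G^*)=4$ gives the graphs with $\beta(G^*)=2=n(G^*)-2$, which by Theorem~\ref{n-2} are among $K_{s,t},K_s\vee\overline K_t,K_s\vee(K_t\cup K_1)$ restricted to order $4$ and diameter $\le 2$ — these yield $C_4$, the paw, and the kite ($K_4$ minus an edge); $n(G^*)=5$ gives graphs with $\beta(G^*)=2$, again read off from Theorem~\ref{n-2} — giving $C_5$, $C_5$ with one chord, $C_5$ with two adjacent chords, the kite-with-a-pendant, and $C_4\vee K_1$. For each such $G^*$ I would then determine exactly which type-assignments are compatible with $\beta(G)=n-3$ by a direct resolving-set computation in $G$ (building the representation vectors $r(v|W)$), which is where the type restrictions in $G_1$–$G_{10}$ come from.

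For the \emph{sufficiency} direction, I would verify for each of the ten families that $\beta(G)=n-3$. The upper bound $\beta(G)\le n-3$ follows in most cases from Proposition~\ref{dim G^*<n-t} (checking $\beta(G^*)=n(G^*)-3$ for the order-$3$ cases and $n(G^*)-2$ for the rest, then noting the gap is absorbed), or from Corollary~\ref{sugraph d=2} by exhibiting a small induced subgraph $H$ of diameter $2$ with $\beta(H)=n(H)-3$ — for instance one of the known $\beta=n-2$ graphs from Theorem~\ref{n-2} sitting inside $G$ together with one extra non-landmark vertex. The lower bound $\beta(G)\ge n-3$ is the reverse: I must show no resolving set of size $n-4$ exists. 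Here Observation~\ref{twins} handles the contribution of the large twin classes, reducing the problem to showing $\beta$ of a suitable "reduced" graph (essentially $G^*$ with types) is at least $n(G^*)-3$ or $n(G^*)-2$ as appropriate; the residual cases with all classes of size $1$ reduce to checking $\beta(G^*)\ge n(G^*)-3$ directly, e.g. $\beta(C_5)=2$, $\beta(C_4)=2$, which are standard.

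The main obstacle I anticipate is not the enumeration itself but the bookkeeping in the type-assignment step: for each candidate $G^*$ one must check, over all $2^{n(G^*)}$ or $3^{\alpha(G^*)}$ type choices, precisely when blowing up the twin classes pushes $\beta$ from $n-3$ down to $n-4$ (equivalently, when an extra landmark becomes forced by Observation~\ref{twins} in a way that is \emph{not} already compensated). The subtle conditions — e.g. in $G_6$ that "two non-adjacent vertices are not of type (K)" and "two adjacent vertices are not of different types (K) and (N)", or in $G_3$ the conditional clause about a degree-$2$ vertex of type (K) — are exactly the boundary cases of this computation, and getting all of them right (and proving they are necessary \emph{and} sufficient, not just sufficient) is the delicate part. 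I would organize this as a sequence of lemmas, one per value of $n(G^*)$, each handling its graphs and type-assignments uniformly, and I would lean on Corollary~\ref{sugraphH<K<G} to transfer the dimension value between nested diameter-$2$ graphs so that many type-assignments can be dispatched by a single containment argument rather than a fresh computation.
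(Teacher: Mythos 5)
Your strategy---push everything down to $G^*$, deduce $\beta(G^*)\ge n(G^*)-3$ from the contrapositive of Proposition~\ref{dim G^*<n-t}, enumerate the candidate twin graphs on at most five vertices, and then settle the type assignments---is genuinely different from the paper's, which never enumerates by order but instead fixes a vertex $v$ with $\Gamma_2(v)\neq\emptyset$, proves that at least one of $\Gamma_1(v)$, $\Gamma_2(v)$ is homogeneous (Lemma~\ref{one homogeneous}), and then bounds $|R^*_1(v^*)|\le 2$, $|\Gamma^*_2(v^*)|\le 3$ and $|R^*_2(v^*)|\le 1$ to squeeze out the ten structures. The trouble is that your plan omits exactly the ingredient those bounds supply: a proof that $n(G^*)\le 5$. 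You assert this bound, but it does not follow from $\beta(G^*)\ge n(G^*)-3$ together with $\alpha(G^*)\le 3$ or Observation~\ref{twins}; those control only the number and size of the nontrivial twin classes, not the number of classes of type (1). Indeed $\beta(G^*)\ge n(G^*)-3$ and $diam(G^*)\le 2$ admit arbitrarily large graphs: every $K_{s,t}$, $K_s\vee\overline{K_t}$ and $K_s\vee(K_t\cup K_1)$ from Theorem~\ref{n-2} qualifies, as does, say, the octahedron $K_{2,2,2}$ with $\beta=3=n-3$. All of these would have to be admitted as candidate twin graphs and then eliminated one by one through twin/type arguments, so your ``finiteness step'' is not actually finite as stated. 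This is the central gap, and nothing in the cited propositions fills it.

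Even granting $n(G^*)\le 5$, the candidate lists are incomplete: for $n(G^*)=4$ you omit $K_{1,3}$, and for $n(G^*)=5$ the diameter-$2$ graphs with $\beta=2$ include the butterfly, the gem, the dart and others, none of which can be ``read off from Theorem~\ref{n-2}'' (that theorem characterizes $\beta=n-2$, so at best you obtain the $\beta=2$ list by complementation, and it is considerably longer than the five graphs you name); each extra candidate must be killed by an explicit twin/type argument, which is where the paper spends most of its effort. By contrast, the sufficiency half of your sketch is essentially sound and close to the paper's: the paper likewise uses Observation~\ref{twins} for the lower bounds, Theorem~\ref{n-2} to rule out $\beta=n-2$ for the upper bounds, and Corollary~\ref{sugraphH<K<G} to sandwich $G$ between two extreme instances $H$ and $R$ sharing the same twin-graph shape.
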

\begin{figure}[ht]
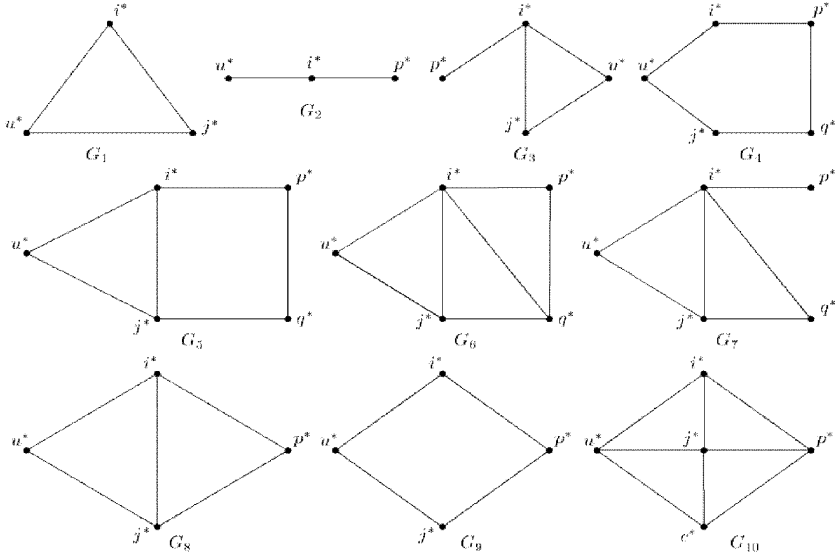
\hspace{1.5cm}
\vspace*{5cm}\special{em:graph fig1.bmp} \vspace*{1.5cm}
\vspace{1.3cm}
\caption{\label{G^*and
dim=n-3}{\scriptsize The twin graphs of graphs with diameter $2$
and metric dimension $n-3$.} }
\end{figure}
\subsection{\label{necessity} Proof of Necessity} Throughout  this
section, $G$ is a  graph of order $n$, diameter $2$, metric
dimension $n-3$, and $G^*$ is the twin graph of $G$. Note that,
Proposition~\ref{diam G^*} implies that $diam(G^*)\leq 2$. Through
a sequence of lemmas and propositions, we show that $G^*$ has one
of the structures $G_1$ to $G_{10}$.
\begin{pro}\label{G^*=K_n pro}
If $diam(G^*)=1$, then $G^*$ satisfies in structure $G_1$.
\end{pro}
\begin{proof}{
Let $u^*,v^*$ be two vertices of $G^*$ of type (1K). Since $G^*$
is a complete graph, every pair of vertices $u\in u^*$ and $v\in
v^*$ of $G$ are twins, thus $u^*=v^*$. Hence, $G^*$ has at most
one vertex of type (1K). If vertices $v^*_1,v^*_2,v^*_3$, and
$v^*_4$ of $G^*$ (except possibly $v^*_1$) are of type (N), then
we choose an arbitrary vertex $v_i\in v^*_i$, for each $i$, $1\leq
i\leq 4$, and $u_i\in v^*_i\backslash\{v_i\}$, for each $i$,
$2\leq i\leq 4$. Let $T=\{u_2,u_3,u_4\}$. It follows
that$$r(v_1|T)=(1,1,1),\quad r(v_2|T)=(2,1,1),\quad
r(v_3|T)=(1,2,1),\quad r(v_3|T)=(1,1,2).$$ Hence, the set
$V(G)\backslash\{v_1,v_2,v_3,v_4\}$ is a resolving set for $G$ and
$\beta(G)\leq n-4$. This contradicts our assumption,
$\beta(G)=n-3$. Therefore, $G^*$ has at most three vertices.
Assume $G^*\cong K_t$ for some  integer $t\in [1,3]$. If $t=1$,
then $G\cong K_n$ or $G \cong\overline K_n$. If $t=2$, then
$G\cong K_{r,s}$ or $G\cong K_r \vee \overline K_s$. Since
$\beta(G)=n-3$, the above cases are impossible. Consequently
$G^*\cong K_3$, which is the desired conclusion. }\end{proof} The
remainder of this section will be devoted to the case
$diam(G^*)=2$. It is clear that in this case, there exists a
vertex $v\in V(G)$ such that $\Gamma^*_2(v^*)\neq \emptyset$ and
$\Gamma_2(v)\neq\emptyset$.
\begin{lemma}\label{G^*=K_3}
If \- $\Gamma_2^*(v^*)\neq\emptyset$ and $v\in v^*$, then
$\bigcup_{u^*\in \Gamma_i^*(v^*)}u^*  \subseteq \Gamma_i(v)$,
where $i\in\{1,2\}$, $\bigcup_{u^*\in R_2^*(v)}u^*
\linebreak\subseteq R_2(v)$, and $\bigcup_{u^*\in
R_1^*(v)}u^*=R_1(v)$. Moreover, if $v^*$ is of type (1K), then
$\bigcup_{u^*\in \Gamma_2^*(v^*)}u^*=\Gamma_2(v)$,
$R_1^*(v^*)=R_1^*(v)$, and $R_2^*(v^*)=R_2^*(v)$.
\end{lemma}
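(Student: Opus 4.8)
The plan is to unwind the definitions carefully, distinguishing throughout between distances measured in $G$ and distances measured in $G^*$, and to use Proposition~\ref{diam G^*} as the bridge between them. Fix $v\in v^*$ with $\Gamma_2^*(v^*)\neq\emptyset$. First I would establish the containments $\bigcup_{u^*\in\Gamma_i^*(v^*)}u^*\subseteq\Gamma_i(v)$ for $i\in\{1,2\}$. Take $u^*\in\Gamma_i^*(v^*)$ and any $w\in u^*$. Since $u^*\neq v^*$, $u$ and $v$ are not twins, so Proposition~\ref{diam G^*} gives $d_G(w,v)=d_{G^*}(u^*,v^*)=i$; thus $w\in\Gamma_i(v)$. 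Because $diam(G^*)=diam(G)=2$, the sets $\Gamma_1^*(v^*)\cup\Gamma_2^*(v^*)$ and $\Gamma_1(v)\cup\Gamma_2(v)$ each account for all vertices other than those in $v^*$ (on the $G$ side) respectively other than $v^*$ (on the $G^*$ side), so the two containments together are in fact an equality of unions: $\bigl(\bigcup_{u^*\in\Gamma_1^*(v^*)}u^*\bigr)\cup\bigl(\bigcup_{u^*\in\Gamma_2^*(v^*)}u^*\bigr)=\Gamma_1(v)\cup\Gamma_2(v)$. This ``sandwich'' observation is what will force several of the later equalities.

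Next I would handle the $R_1$/$R_2$ statements. For $\bigcup_{u^*\in R_1^*(v^*)}u^*=R_1(v)$: if $u^*\in R_1^*(v^*)$ then $u^*\in\Gamma_1^*(v^*)$ and there is $y^*\in\Gamma_2^*(v^*)$ with $u^*\sim y^*$; picking $w\in u^*$ and $z\in y^*$, adjacency of the twin classes gives $w\sim z$ with $z\in\Gamma_2(v)$, so $w\in R_1(v)$. Conversely, if $w\in R_1(v)$, say $w\sim z$ with $z\in\Gamma_2(v)$, then $w^*\in\Gamma_1^*(v^*)$ and $z^*\in\Gamma_2^*(v^*)$ (using the equality of unions above to place $z^*$ in $\Gamma_2^*(v^*)$ rather than $\Gamma_1^*(v^*)$ — note $z\notin v^*$ since $d_G(z,v)=2$, and $z\notin u^*$ for any $u^*\in\Gamma_1^*$ because $d_{G^*}$ would then be $1$, contradicting $d_G(z,v)=2=d_{G^*}(z^*,v^*)$ via Proposition~\ref{diam G^*}), and $w^*\sim z^*$, so $w^*\in R_1^*(v^*)$, whence $w\in\bigcup_{u^*\in R_1^*(v^*)}u^*$. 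The inclusion $\bigcup_{u^*\in R_2^*(v^*)}u^*\subseteq R_2(v)$ then follows by taking complements within $\Gamma_1(v)$: $R_2^*(v^*)=\Gamma_1^*(v^*)\setminus R_1^*(v^*)$, and combining $\bigcup_{u^*\in\Gamma_1^*}u^*\subseteq\Gamma_1(v)$ with $\bigcup_{u^*\in R_1^*}u^*=R_1(v)$ shows any $w$ in the left side lies in $\Gamma_1(v)\setminus R_1(v)=R_2(v)$.

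Finally, the ``moreover'' clause assumes $v^*$ is of type (1K), i.e. $v^*$ induces a complete graph (or is a singleton). Here the point is that for $w\in v^*$, the neighbourhood $\Gamma_1(w)$ consists of $v^*\setminus\{w\}$ together with $\bigcup_{u^*\in\Gamma_1^*(v^*)}u^*$ — every vertex outside $v^*$ that is in some $\Gamma_1^*$-class is adjacent to $w$ by the twin-class adjacency rule, and vertices of $v^*$ are mutually adjacent. Consequently $\Gamma_2(w)=V(G)\setminus\bigl(\{w\}\cup\Gamma_1(w)\bigr)=\bigcup_{u^*\in\Gamma_2^*(v^*)}u^*$, which is the first claimed equality. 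For $R_1^*(v^*)=R_1^*(v)$ and $R_2^*(v^*)=R_2^*(v)$, recall $\Gamma_i^*(v^*)=\Gamma_i^*(v)$ for $i\neq 0$ (stated in the excerpt), and the definition of $R_1^*(v)$ only involves the sets $\Gamma_1^*(v)=\Gamma_1^*(v^*)$ and $\Gamma_2^*(v)=\Gamma_2^*(v^*)$ and the adjacency in $G^*$; hence $R_1^*(v)=R_1^*(v^*)$ by comparing definitions termwise, and $R_2^*$ follows by complementation. The main obstacle I anticipate is the bookkeeping in the converse direction of the $R_1$ argument — specifically, being sure that a vertex $z$ with $d_G(z,v)=2$ lands in a class of $\Gamma_2^*(v^*)$ and not elsewhere; this is exactly where the hypothesis $diam(G)=diam(G^*)=2$ and Proposition~\ref{diam G^*} must be invoked together, and where the type-(1K) hypothesis is genuinely needed to upgrade the inclusion $\bigcup_{u^*\in\Gamma_2^*}u^*\subseteq\Gamma_2(v)$ to an equality.
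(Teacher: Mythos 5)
Your reading of the lemma's middle claims is off, and the error propagates through the whole middle of your argument. In the paper's notation, $R_1^*(v)$ for a vertex $v$ of $G$ means $(R_1(v))^*=\{u^*\in V(G^*) : u^*\subseteq R_1(v)\}$ (this is the $S^*$ notation for subsets $S\subseteq V(G)$), whereas $R_1^*(v^*)$ is defined via distances in $G^*$. So the claim $\bigcup_{u^*\in R_1^*(v)}u^*=R_1(v)$ only says that $R_1(v)$ is a union of twin classes; the paper proves it by observing that a vertex with a neighbour in $\Gamma_2(v)$ cannot be a twin of a vertex of $R_2(v)\cup\{v\}$, which has none, so twins of members of $R_1(v)$ stay in $R_1(v)$. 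Likewise $\bigcup_{u^*\in R_2^*(v)}u^*\subseteq R_2(v)$ records that $R_2(v)$ is twin-closed except possibly for twins of $v$. What you set out to prove instead, $\bigcup_{u^*\in R_1^*(v^*)}u^*=R_1(v)$, is false in general: together with its $R_2$ companion it is essentially the content of the \emph{moreover} clause and genuinely needs $v^*$ to be of type (1K). The step where your argument breaks is the parenthetical ``$z\notin v^*$ since $d_G(z,v)=2$'': a \emph{non-adjacent} twin of $v$ lies at distance $2$ from $v$, so when $v^*$ is of type (N) a vertex $z\in\Gamma_2(v)$ may belong to $v^*$, and then $z^*=v^*\notin\Gamma_2^*(v^*)$. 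Concretely, let $G$ have vertices $\{v,v',w,a,z\}$ and edges $vw,va,v'w,v'a,wa,az$. Then $diam(G)=2$, $v,v'$ are non-adjacent twins, $\Gamma_2(v)=\{v',z\}$, and $w\in R_1(v)$ because $w\sim v'$; yet $\Gamma_2^*(v^*)=\{z^*\}$ and $w$ is not adjacent to $z$, so $w^*\notin R_1^*(v^*)$ and $\bigcup_{u^*\in R_1^*(v^*)}u^*=\{a\}\neq\{w,a\}=R_1(v)$, while $(R_1(v))^*=\{w^*,a^*\}$ does recover $R_1(v)$ as the lemma asserts. The same oversight sinks your ``sandwich'' identity: $\bigl(\bigcup_{u^*\in\Gamma_1^*(v^*)}u^*\bigr)\cup\bigl(\bigcup_{u^*\in\Gamma_2^*(v^*)}u^*\bigr)$ equals $V(G)\setminus v^*$, not $V(G)\setminus\{v\}=\Gamma_1(v)\cup\Gamma_2(v)$; the two differ exactly by the twins of $v$.

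A second symptom of the misreading is that your proof of $R_1^*(v^*)=R_1^*(v)$ in the moreover clause reduces it to a tautology (``comparing definitions termwise''), which cannot be right, since that equality fails in the example above where $v^*$ is of type (N). The paper's argument is: when $v^*$ is of type (1K), every twin of $v$ is adjacent to $v$ and therefore lies in $R_2(v)$, so $v^*\subseteq R_2(v)\cup\{v\}$; this is what upgrades $\bigcup_{u^*\in\Gamma_2^*(v^*)}u^*\subseteq\Gamma_2(v)$ to an equality (your treatment of that particular point is fine), and then a class $u^*\subseteq R_1(v)$ lying in $R_2^*(v^*)$ would have all neighbours of each $u\in u^*$ inside $\Gamma_1(v)\cup\{v\}$, contradicting $u\in R_1(v)$. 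Your opening containments $\bigcup_{u^*\in\Gamma_i^*(v^*)}u^*\subseteq\Gamma_i(v)$ via Proposition~\ref{diam G^*} are correct and match the paper in substance, but the heart of the lemma --- the twin-closure of $R_1(v)$ and $R_2(v)$, and the role of the (1K) hypothesis in controlling where the twins of $v$ sit --- is missing.
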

\begin{proof}{  It is clear that a vertex in
$\Gamma_1(v)$ is not twin with any vertex of $\Gamma_2(v)$.
Therefore, all twins of vertices in $\Gamma_1(v)$ are in the set
$\Gamma_1(v)\cup \{v\}$ and all twins of vertices in $\Gamma_2(v)$
are in $\Gamma_2(v)\cup \{v\}$. This gives $\Gamma_i(v)\backslash
v^*=\bigcup_{u^*\in \Gamma_i^*(v^*)}u^*$, for $i\in\{1,2\}$.
Hence, $\bigcup_{u^*\in \Gamma_i^*(v^*)}u^*\subseteq \Gamma_i(v)$,
when $i\in\{1,2\}$. Note that all twins of vertices in $R_1(v)$
are in $R_1(v)$, because each member of $R_1(v)$ is adjacent to
$v$ and has at least one neighbor in $\Gamma_2(v)$ while the
vertices in $R_2(v)\cup\{v\}$ have not such neighbors. Thus
$\bigcup_{u^*\in R_1^*(v)}u^*=R_1(v)$. By the same reason, all
twins of vertices in $R_2(v)$ are in $R_2(v)\cup\{v\}$.
Consequently, $\bigcup_{u^*\in R_2^*(v)}u^*=R_2(v)\backslash
v^*\subseteq R_2(v)$.
\par Now let $v^*$ is of type (1K). Therefore, $v$ can only be
twin with the vertices of $R_2(v)$. Hence, $\bigcup_{u^*\in
\Gamma_2^*(v^*)}u^*=\Gamma_2(v)$. It is clear that
$R^*_1(v^*)\subseteq R^*_1(v)$. If there exist a vertex $u^*\in
R^*_1(v)\backslash R^*_1(v^*)$, then $u^*\in R^*_2(v^*)$, because
$\Gamma^*_1(v^*)=R^*_1(v^*)\cup R^*_2(v^*)$. Since $v^*\subseteq
R_2(v)\cup \{v\}$, all neighbors of each vertex $u\in u^*$ are in
the set $\Gamma_1(v)\cup\{v\}$, this contradicts the fact that
$u\in R_1(v)$. Therefore, $R_1^*(v^*)=R_1^*(v)$ and consequently,
$R_2^*(v^*)=R_2^*(v)$.
  }\end{proof}
\begin{lemma}\label{one homogeneous}
For each $v$, where $\Gamma_2(v)\neq\emptyset$, at least one of
the sets $\Gamma_1(v)$ and $\Gamma_2(v)$ is homogeneous.
\end{lemma}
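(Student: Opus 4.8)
The plan is to argue by contradiction. Suppose some vertex $v$ with $\Gamma_2(v)\neq\emptyset$ is such that \emph{neither} $\Gamma_1(v)$ \emph{nor} $\Gamma_2(v)$ is homogeneous; I will produce a resolving set of size $n-4$, contradicting $\beta(G)=n-3$.

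First I would record a small structural fact: if $S\subseteq V(G)$ induces a subgraph that is neither complete nor empty, then $S$ contains a \emph{mixed} vertex, i.e. a vertex having both a neighbour and a non-neighbour inside $S$. To see this, look at the components of $G[S]$: if some component is not a clique it contains two vertices at distance $2$, and the middle vertex of a shortest path between them is mixed; otherwise every component is a clique, and since $G[S]$ is neither complete nor empty there are at least two components and at least one of them has an edge, so a vertex of that component has a neighbour inside its clique and a non-neighbour in another component. Applying this to $S=\Gamma_1(v)$ gives a vertex $a\in\Gamma_1(v)$ together with $b,c\in\Gamma_1(v)$ with $a\sim b$ and $a\nsim c$; since $diam(G)=2$ this means $d(a,b)=1$ and $d(a,c)=2$. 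Applying it to $S=\Gamma_2(v)$ gives $d\in\Gamma_2(v)$ and $e,f\in\Gamma_2(v)$ with $d(d,e)=1$ and $d(d,f)=2$.

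Next I would set $T=\{b,c,e,f\}$ and $W=V(G)\setminus T$, and check that the three vertices $v,a,d$ all lie in $W$: they are pairwise distinct and distinct from $b,c,e,f$ because $v\notin\Gamma_1(v)\cup\Gamma_2(v)$, $a\neq b,c$ and $d\neq e,f$ (no vertex is adjacent or at distance $2$ from itself), and $\Gamma_1(v)\cap\Gamma_2(v)=\emptyset$ while $a,b,c\in\Gamma_1(v)$ and $d,e,f\in\Gamma_2(v)$. Then a one-line distance check shows the representations of $b,c,e,f$ with respect to $W$ are pairwise distinct: the coordinate at $v$ separates $\{b,c\}\subseteq\Gamma_1(v)$ from $\{e,f\}\subseteq\Gamma_2(v)$; the coordinate at $a$ separates $b$ (distance $1$) from $c$ (distance $2$); the coordinate at $d$ separates $e$ from $f$. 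Hence $W$ resolves $V(G)\setminus W=T$, and since each $w\in W$ is the unique vertex of $G$ at distance $0$ from itself, $W$ resolves $G$. Therefore $\beta(G)\leq|W|=n-4$, the desired contradiction; so at least one of $\Gamma_1(v),\Gamma_2(v)$ is homogeneous.

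The only points requiring care are the mixed-vertex fact and the bookkeeping that $v,a,d$ survive in $W$ and play the roles claimed; given those, every remaining verification is an immediate consequence of $diam(G)=2$. I do not anticipate a genuine obstacle here — this lemma is a warm-up that feeds the later case analysis.
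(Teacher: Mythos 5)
Your proposal is correct and is essentially the paper's own argument: the paper likewise picks $v_1\sim v_2\nsim v_3$ in $\Gamma_1(v)$ and $u_1\sim u_2\nsim u_3$ in $\Gamma_2(v)$ and checks that $\{v,v_2,u_2\}$ (your $\{v,a,d\}$) resolves the four remaining vertices, yielding $\beta(G)\leq n-4$. One cosmetic slip in your justification of the mixed-vertex fact: for two vertices at distance $2$ in a component of $G[S]$, the mixed vertex is an \emph{endpoint} of the shortest path (adjacent to the middle vertex, non-adjacent to the other endpoint), not the middle vertex itself, which in an induced $P_3$ is adjacent to everything; the fact you need still holds verbatim.
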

\begin{proof}{
Let $\Gamma_2(v)\neq\emptyset$. On the contrary, suppose that both
$\Gamma_1(v)$ and $\Gamma_2(v)$ are not homogeneous. Therefore,
there exist vertices $v_1, v_2$, and $v_3$ in $\Gamma_1(v)$, and
vertices $u_1, u_2$, and $u_3$ in $\Gamma_2(v)$ such that $v_1\sim
v_2, v_2\nsim  v_3$ and $u_1\sim u_2$, $u_2\nsim  u_3$. If
$W^\prime=\{v,v_2,u_2\}$, then the representations of
$v_1,v_3,u_1$ and $u_3$ with respect to $W^\prime$ are as follows
$$r(v_1|W^\prime)=(1,1,*),\quad r(v_3|W^\prime)=(1,2,*),\quad
r(u_1|W^\prime)=(2,*,1),\quad r(u_3|W^\prime)=(2,*,2),$$where $*$
is $1$ or $2$. These four representations are distinct, hence,
$V(G)\backslash\{v_1,v_3,u_1,u_3\}$ is a resolving set for $G$.
Thus $\beta(G)\leq n-4$, which is a contradiction. Therefore, at
least one of the sets $\Gamma_1(v)$ or $\Gamma_2(v)$ is
homogeneous. }\end{proof} By Lemma~\ref{one homogeneous}, to
complete the proof of necessity, we need to consider the following
two  cases.
\vspace{4mm} \\{\bf Case 1.} There exist a vertex $v\in V(G)$ such
that $\Gamma^*_2(v^*)\neq\emptyset$ and $\Gamma_1(v)$ is
homogeneous. \vspace{4mm}\\ By the assumption of Case 1, the
following results are obtained.
\begin{fact}\label{R_1^*<2}
$|R^*_1(v^*)|\leq 2$.
\end{fact}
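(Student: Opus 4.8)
The plan is to argue by contradiction: assuming $|R^*_1(v^*)|\geq 3$, I will exhibit a four-element set $T\subseteq V(G)$ whose complement is a resolving set of $G$, contradicting $\beta(G)=n-3$. Since $R^*_1(v^*)=R^*_1(v)$ (because $\Gamma^*_i(v^*)=\Gamma^*_i(v)$ for $i\neq 0$) and $\bigcup_{u^*\in R^*_1(v)}u^*=R_1(v)$ by Lemma~\ref{G^*=K_3}, the set $R_1(v)$ meets at least three distinct twin classes; choose representatives $x_1,x_2,x_3\in R_1(v)$, one from each. Then each $x_i$ is adjacent to $v$, each $x_i$ has a neighbour in $\Gamma_2(v)$ (so in particular $\Gamma_2(v)\neq\emptyset$), and $x_1,x_2,x_3$ are pairwise non-twin. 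For $i\in\{1,2,3\}$ put $A_i:=\Gamma_1(x_i)\cap\Gamma_2(v)$, so $A_i\neq\emptyset$.

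The first key step is that $A_1,A_2,A_3$ are pairwise distinct, and this is where the hypothesis of Case~1 enters. Since $\Gamma_1(v)$ is homogeneous, $\Gamma_1(x_i)\cap\Gamma_1(v)$ equals $\Gamma_1(v)\setminus\{x_i\}$ for all $i$ when $G[\Gamma_1(v)]$ is complete, and equals $\emptyset$ for all $i$ when it is empty; together with $v\in\Gamma_1(x_i)$ this gives $\Gamma_1(x_i)=\{v\}\cup(\Gamma_1(x_i)\cap\Gamma_1(v))\cup A_i$. Hence $A_i=A_j$ would force $\Gamma_1(x_i)\setminus\{x_j\}=\Gamma_1(x_j)\setminus\{x_i\}$, i.e. $x_i$ and $x_j$ would be twins, which is impossible. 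So $A_1,A_2,A_3$ are three distinct nonempty subsets of $\Gamma_2(v)$; in particular $|\Gamma_2(v)|\geq 2$. An elementary separation argument now produces two \emph{distinct} vertices $y,y'\in\Gamma_2(v)$ for which the vectors $(d(x_i,y),d(x_i,y'))$, $i=1,2,3$, are pairwise distinct: picking $y$ belonging to exactly one of two of the $A_i$, it separates two of the three pairs $\{A_i,A_j\}$ but not the third, and any $y'$ belonging to exactly one of the members of that third pair completes the separation and is necessarily different from $y$. (Here for $x\in\Gamma_1(v)$ and $u\in\Gamma_2(v)$ one has $d(x,u)=1$ or $2$ according as $u\in\Gamma_1(x)$ or not, since $diam(G)=2$.)

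It remains to choose $T$, and there are two cases. If $|\Gamma_2(v)|\geq 3$, pick $z\in\Gamma_2(v)\setminus\{y,y'\}$ and set $T=\{x_1,x_2,x_3,z\}$; with respect to $\{v,y,y'\}\subseteq V(G)\setminus T$ we get $r(x_i|\{v,y,y'\})=(1,d(x_i,y),d(x_i,y'))$, which are pairwise distinct, while $r(z|\{v,y,y'\})$ has first coordinate $2$. If $|\Gamma_2(v)|=2$, then $\Gamma_2(v)=\{y,y'\}$ and, the $A_i$ being the three nonempty subsets of $\{y,y'\}$, we may relabel so that $A_1=\{y\}$, $A_2=\{y'\}$, $A_3=\{y,y'\}$; set $T=\{v,x_1,x_2,x_3\}$, and with respect to $\{y,y'\}\subseteq V(G)\setminus T$ the representations of $v,x_1,x_2,x_3$ are $(2,2),(1,2),(2,1),(1,1)$. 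In both cases the listed subset of $V(G)\setminus T$ already gives the four vertices of $T$ pairwise distinct representations, and each vertex of $T$ is automatically separated from every vertex of the resolving set $V(G)\setminus T$; hence $V(G)\setminus T$ resolves $G$ and $\beta(G)\leq n-4$, a contradiction. Note that in each case $x_1,x_2,x_3$, $y$, $y'$ (and $z$) are distinct vertices, so $T$ and the chosen landmark set are disjoint and of the right sizes.

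The step I expect to be the main obstacle is the degenerate case $|\Gamma_2(v)|=2$: there one cannot spend $v,y,y'$ as landmarks and still keep a vertex of $\Gamma_2(v)$ free to complete $T$, so the trick is to move $v$ itself into $T$ and verify that the single pair $\{y,y'\}$ already separates $v,x_1,x_2,x_3$. Apart from that, the only delicate point is the distinctness of the $A_i$, for which homogeneity of $\Gamma_1(v)$ is exactly what is required.
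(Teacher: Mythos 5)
Your proof is correct, and it rests on the same combinatorial heart as the paper's argument --- namely that homogeneity of $\Gamma_1(v)$ forces pairwise non-twin vertices of $R_1(v)$ to have pairwise distinct (nonempty) neighbourhoods in $\Gamma_2(v)$ --- but the execution is genuinely different. The paper stays entirely in the twin graph: it observes that $\Gamma^*_2(v^*)$ resolves $R^*_1(v^*)\cup\{v^*\}$ in $G^*$ (distinct vertices of $R^*_1(v^*)$ have distinct traces on $\Gamma^*_2(v^*)$, each has a coordinate $1$, and $v^*$ has all coordinates $2$), so $\beta(G^*)\leq n(G^*)-|R^*_1(v^*)|-1$, and then combines this with $\beta(G^*)\geq n(G^*)-3$, which follows from Proposition~\ref{dim G^*<n-t}. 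By always placing $v^*$ in the removed set and using \emph{all} of $\Gamma^*_2(v^*)$ as landmarks, the paper never has to worry about whether $\Gamma_2(v)$ has ``spare'' vertices; you pay for working directly in $G$ with the two-landmark separation argument (which is correct: one vertex of $\Gamma_2(v)$ leaves exactly one pair $\{A_i,A_j\}$ unseparated, and a second vertex in $A_i\triangle A_j$ finishes the job and is automatically distinct from the first) and with the case split on $|\Gamma_2(v)|$; your degenerate case, where $v$ itself is moved into $T$, is precisely the move the paper makes globally. What your version buys is independence from the transfer machinery of Proposition~\ref{dim G^*<n-t}: you exhibit an explicit $(n-4)$-element resolving set of $G$ itself.

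One small inaccuracy: the parenthetical claim $R^*_1(v^*)=R^*_1(v)$ is not true in general. Lemma~\ref{G^*=K_3} guarantees it only when $v^*$ is of type (1K), and it can fail when $v^*$ is of type (N), since a vertex of $R_1(v)$ whose only neighbours in $\Gamma_2(v)$ are twins of $v$ belongs to $R^*_1(v)$ but not to $R^*_1(v^*)$ (consider $C_4$). Fortunately you only need the inclusion $R^*_1(v^*)\subseteq R^*_1(v)$, i.e.\ that every class $u^*\in R^*_1(v^*)$ is contained in $R_1(v)$, and that does hold: such a $u^*$ lies in $\Gamma^*_1(v^*)$ and is adjacent to some $y^*\in\Gamma^*_2(v^*)$, so every $u\in u^*$ lies in $\Gamma_1(v)$ and has a neighbour in $\Gamma_2(v)$. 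With the equality replaced by this inclusion, your argument stands.
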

\begin{proof}{ Since every vertex of $R^*_1(v^*)$ has a neighbor in
$\Gamma^*_2(v^*)$ and $\Gamma_1(v)$ is homogeneous, for distinct
vertices $x^*,y^*\in R^*_1(v^*)$, the sets $\Gamma^*_1(x^*)\bigcap
\Gamma^*_2(v^*)$ and $\Gamma^*_1(y^*)\bigcap \Gamma^*_2(v^*)$ are
distinct nonempty sets. Therefore, the set $\Gamma^*_2(v^*)$
resolves the vertices of $R^*_1(v^*)$ in $G^*$. Moreover, since
every vertex of $R^*_1(v^*)$ has a neighbor in $\Gamma^*_2(v^*)$
and $v^*$ has not such a neighbor, if we compute the
representations of the vertices in $R^*_1(v^*)\cup\{v^*\}$ with
respect to $\Gamma^*_2(v^*)$, then the representation of each
vertex in $R^*_1(v^*)$ has a coordinate 1 while all coordinates of
$r(v^*|\Gamma^*_2(v^*))$ are 2. Therefore, $\Gamma^*_2(v^*)$
resolves the set $R^*_1(v^*)\cup\{v^*\}$ consequently,
$V(G^*)\backslash(R^*_1(v^*)\cup\{v^*\})$ is a resolving set for
$G^*$. Thus $\beta(G^*)\leq n(G^*)-|R^*_1(v^*)\cup\{v^*\}|$. On
the other hand, by Propositions~\ref{dim G^*<n-t}, we have
$\beta(G^*)\geq n(G^*)-3$. Therefore,
$|R^*_1(v^*)\cup\{v^*\}|\leq 3$, which yields $|R^*_1(v^*)|\leq
2$. }
\end{proof}
\begin{lemma}\label{R_2=empty}
If $\Gamma_2(v)$ is not homogeneous, then $R_2(v)$ and
$R^*_2(v^*)$ are empty sets.
\end{lemma}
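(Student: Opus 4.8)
The plan is to argue by contradiction with the standing hypothesis $\beta(G)=n-3$: whenever $\Gamma_2(v)$ is not homogeneous but $R_2(v)\neq\emptyset$ (resp. $R_2^*(v^*)\neq\emptyset$), I will write down four vertices and two landmarks so that the four representations are $(1,2),(2,1),(2,2),(1,1)$, hence distinct, producing a resolving set of $G$ of size $n-4$ --- impossible. Since $diam(G)=2$, every distance below is automatically $1$ or $2$, so the representation computations are immediate once the four vertices are chosen and checked to be distinct.

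\emph{Proof that $R_2(v)=\emptyset$.} Suppose $w\in R_2(v)$. As $\Gamma_2(v)$ is neither complete nor empty, it contains three distinct vertices $p,q,r$ with $p\sim q$ and $p\nsim r$; as $diam(G)=2$, $p$ has a neighbour $x_p\in\Gamma_1(v)$. Since $w$ has no neighbour in $\Gamma_2(v)$ we get $d(w,p)=2$, $w\neq x_p$, and $w,q,r,x_p$ are four distinct vertices, none equal to $v$ or $p$. With respect to $\{v,p\}$,
\[
r(w|\{v,p\})=(1,2),\quad r(q|\{v,p\})=(2,1),\quad r(r|\{v,p\})=(2,2),\quad r(x_p|\{v,p\})=(1,1),
\]
so $V(G)\setminus\{w,q,r,x_p\}$ is a resolving set of size $n-4$, a contradiction. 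Hence $R_2(v)=\emptyset$, i.e. $\Gamma_1(v)=R_1(v)$.

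\emph{Proof that $R_2^*(v^*)=\emptyset$.} As in the proof of Lemma~\ref{G^*=K_3}, $\bigcup_{u^*\in\Gamma_i^*(v^*)}u^*=\Gamma_i(v)\setminus v^*$ for $i\in\{1,2\}$. Suppose $u^*\in R_2^*(v^*)$ and fix $u\in u^*$. Then $u\in\Gamma_1(v)=R_1(v)$, so $u$ has a neighbour in $\Gamma_2(v)$; but $u^*\in R_2^*(v^*)$ together with the displayed identity gives that $u$ has no neighbour in $\Gamma_2(v)\setminus v^*$. Thus every $\Gamma_2(v)$-neighbour of $u$ lies in $v^*$, which forces $v^*$ to be of type (N) (and, since $\Gamma_2(v)$ is not homogeneous while $v^*\setminus\{v\}$ is, $\Gamma_2(v)\setminus v^*\neq\emptyset$). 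If $\Gamma_2(v)\setminus v^*$ is not homogeneous, I repeat the previous construction with $u$ playing the role of $w$ and with $p,q,r$ chosen inside $\Gamma_2(v)\setminus v^*$. If $\Gamma_2(v)\setminus v^*$ is homogeneous, then --- because the type-(N) twins in $v^*\setminus\{v\}$ send no edge into $\Gamma_2(v)$ while $\Gamma_2(v)$ itself is not homogeneous --- $\Gamma_2(v)\setminus v^*$ must be a clique $\{p_1,\dots,p_m\}$ with $m\geq2$ and $v^*\setminus\{v\}=\{z_1,\dots,z_k\}$ with $k\geq1$; picking a neighbour $x\in\Gamma_1(v)$ of $p_2$ (necessarily $x\neq u$), the four distinct vertices $u,z_1,p_1,x$ have representations $(1,2),(2,2),(2,1),(1,1)$ with respect to $\{v,p_2\}$, again yielding a resolving set of size $n-4$. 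In every case $\beta(G)\leq n-4$, a contradiction, so $R_2^*(v^*)=\emptyset$.

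The routine part is the representation arithmetic; the part that needs care is the second half --- verifying in each sub-case that the four exhibited vertices are distinct and disjoint from the two landmarks, and ruling out the degenerate configuration in which all of $u$'s far neighbours are type-(N) twins of $v$. That last configuration (the clique sub-case) is the only place where the full strength of $\beta(G)=n-3$ is used, rather than just $diam(G)=2$ and elementary bookkeeping.
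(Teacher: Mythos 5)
Your proof is correct and follows essentially the same route as the paper: first kill $R_2(v)$ with a four-vertex set resolved by $\{v,p\}$ where $p$ witnesses the non-homogeneity of $\Gamma_2(v)$, then deduce that $v^*$ must be of type (N) and exhibit four vertices (a non-adjacent twin of $v$, a vertex of $R_2^*(v^*)$'s class, an endpoint of an edge in $\Gamma_2(v)\setminus v^*$, and a neighbour of the other endpoint in $\Gamma_1(v)$) resolved by two landmarks. The only difference is cosmetic: your case split on whether $\Gamma_2(v)\setminus v^*$ is homogeneous is unnecessary, since your clique sub-case construction only needs one edge inside $\Gamma_2(v)\setminus v^*$, which exists in both sub-cases; this is exactly the paper's single construction.
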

\begin{proof}{
Note that $|R^*_1(v^*)|\geq 1$, otherwise
$\Gamma^*_2(v^*)=\emptyset$. Since $\Gamma_2(v)$ is not
homogeneous, there exist vertices $i,j,$ and $k$ in $\Gamma_2$
such that $i\sim j$ and $j\nsim  k$. Now if
$R_2(v)\neq\emptyset$, then let  $r_1\in R_1(v)\cap \Gamma_1(j)$,
$r_2\in R_2(v)$ and $W_0=\{v,j\}$. Thus, $r(i|W_0)=(2,1)$,
$r(k|W_0)=(2,2)$, $r(r_1|W_0)=(1,1)$ and $r(r_2|W_0)=(1,2)$, and
so $\beta(G)\leq n-4$. This contradiction implies
$R_2(v)=\emptyset$.
\par If  $R^*_2(v^*)\neq\emptyset$, then $\bigcup_{u^*\in R^*_2(v^*)}u^*\nsubseteq
R_2(v)$ and hence, by Lemma~\ref{G^*=K_3}, $v^*$ is of type (N).
Therefore, there exist two adjacent vertices $a,b\in
\bigcup_{u^*\in\Gamma^*_2(v^*)}u^*$, otherwise $\Gamma_2(v)$ is
homogeneous. Since $diam(G^*)=2$, there exist a vertex $r_1\in
\bigcup_{u^*\in R^*_1(v^*)}u^*$ such that $r_1\sim a$. Now let
$v_1,v_2\in v^*,~r_2\in\bigcup_{u^*\in R^*_2(v^*)}u^*$, and
$W=\{v_1,a\}$. Thus, we have $$r(v_2|W)=(2,2),\quad
r(r_1|W)=(1,1),\quad r(r_2|W)=(1,2),\quad r(b|W)=(2,1).$$
Therefore, $V(G)\backslash\{v_2,r_1,r_2,b\}$ is a resolving set
for $G$, with cardinality $n-4$, which contradicts our assumption
$\beta(G)=n-3$. Consequently $R^*_2(v^*)=\emptyset$.}
\end{proof}
\begin{fact}\label{Gamma^*_2<3}
$|\Gamma^*_2(v^*)|\leq 3$.
\end{fact}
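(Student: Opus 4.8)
The plan is to bound $|\Gamma^*_2(v^*)|$ by a direct counting argument that produces a resolving set for $G^*$ (or for $G$) that is too small if $|\Gamma^*_2(v^*)|\geq 4$, exactly in the style of Fact~\ref{R_1^*<2}. We are in Case~1, so $\Gamma_1(v)$ is homogeneous and $\Gamma^*_2(v^*)\neq\emptyset$; by Fact~\ref{R_1^*<2} we already know $|R^*_1(v^*)|\leq 2$. First I would split into the two subcases according to whether $\Gamma_2(v)$ is homogeneous. If $\Gamma_2(v)$ is not homogeneous, then Lemma~\ref{R_2=empty} gives $R^*_2(v^*)=\emptyset$, so every neighbor of $v^*$ in $G^*$ lies in $R^*_1(v^*)$, whence $\deg_{G^*}(v^*)\leq 2$ and every vertex of $\Gamma^*_2(v^*)$ must be adjacent to one of the at most two vertices of $R^*_1(v^*)$. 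I would then argue that if $|\Gamma^*_2(v^*)|\geq 4$ one can pick four vertices there whose adjacency pattern to the (at most two) vertices of $R^*_1(v^*)$, together with one coordinate distinguishing adjacency/non-adjacency inside $\Gamma^*_2(v^*)$, lets a $3$-element set resolve four of them — contradicting $\beta(G^*)\geq n(G^*)-3$ from Proposition~\ref{dim G^*<n-t}. Concretely: the two landmarks in $R^*_1(v^*)$ give each vertex of $\Gamma^*_2(v^*)$ a pair of coordinates from $\{1,2\}^2$, so at most four distinct patterns; if two vertices of $\Gamma^*_2(v^*)$ share a pattern they can be told apart by a third landmark chosen inside $\Gamma^*_2(v^*)$ using the non-homogeneity. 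Counting $v^*$ together with the resolved vertices in $\Gamma^*_2(v^*)$ yields at least four vertices of $G^*$ off a resolving set.

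If instead $\Gamma_2(v)$ is homogeneous, the cleaner route is to resolve $G$ directly. Here both $\Gamma_1(v)$ and $\Gamma_2(v)$ are homogeneous, so distances within each are completely determined, and a vertex in $\Gamma_1(v)$ is distinguished from one in $\Gamma_2(v)$ by $v$ itself. I would take $\Gamma^*_2(v^*)$ (choosing one representative per twin class, plus all but one vertex of each class as forced landmarks) and show that if $|\Gamma^*_2(v^*)|\geq 4$ we can delete four representatives from $V(G)$ and still resolve: pick $v$ as one landmark, and pick landmarks among the $R^*_1(v^*)$ vertices (at most two of them) so that, since $\deg_{G^*}$ into $\Gamma^*_2(v^*)$ is spread over $|R^*_1(v^*)|\leq 2$ vertices and every vertex of $\Gamma^*_2(v^*)$ has a neighbor in $R^*_1(v^*)$, each deleted vertex in $\Gamma^*_2(v^*)$ gets a coordinate $1$ from some $R^*_1$-landmark while $v$ gives it coordinate $2$. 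The subtlety is ensuring four vertices of $\Gamma^*_2(v^*)$ are mutually distinguished; with only two $R^*_1$-landmarks one needs a third landmark, and here — unlike the non-homogeneous case — $\Gamma_2(v)$ being homogeneous means we cannot use an edge inside it, so instead I would use the structure inside $R^*_1(v^*)\cup\Gamma^*_2(v^*)$: since $diam(G^*)=2$, the bipartite adjacency between $R^*_1(v^*)$ and $\Gamma^*_2(v^*)$ cannot be constant (else those vertices of $\Gamma^*_2$ would be twins, collapsing in $G^*$), so the two $R^*_1$-landmarks already separate $\Gamma^*_2(v^*)$ into classes of size $\leq \, $ the number of attainable patterns, and if $|\Gamma^*_2(v^*)|\geq 4$ some class has $\geq 2$ indistinguishable vertices, which again are twins in $G^*$ — contradiction, since twin classes of $G^*$ are trivial.

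The main obstacle I anticipate is the homogeneous-$\Gamma_2(v)$ subcase: there the natural third landmark inside $\Gamma_2(v)$ is useless (all distances there are equal), so the argument must instead exploit that $G^*$ has no nontrivial twins to rule out two vertices of $\Gamma^*_2(v^*)$ having the same distance vector to $R^*_1(v^*)\cup\{v^*\}$. I would need to check carefully that "same distances to $R^*_1(v^*)\cup\{v^*\}$" really forces twinship in $G^*$ — this uses $\deg_{G^*}(v^*)\leq |R^*_1(v^*)|\leq 2$ plus $diam(G^*)=2$ to conclude that a vertex of $\Gamma^*_2(v^*)$ has all its $G^*$-neighbors inside $R^*_1(v^*)\cup\Gamma^*_2(v^*)$, and that within $\Gamma^*_2(v^*)$ the homogeneity of $\Gamma_2(v)$ controls the internal adjacencies. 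Once that is in place, $|\Gamma^*_2(v^*)|\leq 3$ follows in both subcases from $\beta(G^*)\geq n(G^*)-3$ (or $\beta(G)\geq n-3$), completing the fact.
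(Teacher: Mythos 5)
Your treatment of the subcase where $\Gamma_2(v)$ is homogeneous is essentially the paper's argument (distinct vertices of $\Gamma^*_2(v^*)$ must have distinct nonempty neighborhoods in $R^*_1(v^*)$, of which there are at most $2^2-1=3$), modulo one imprecision: ``twin classes of $G^*$ are trivial'' is false in general --- $G^*$ can have twins (for example $K_{2,2}^*\cong K_2$). The correct statement, which is what the paper uses, is that two vertices lying in \emph{distinct} classes of $G^*$ cannot be twins \emph{in $G$}; here the homogeneity of $\Gamma_2(v)$ and the fact that all other neighbors lie in $R_1(v)$ let you conclude twinship in $G$ from equal neighborhoods in $R^*_1(v^*)$, collapsing the two classes. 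That part is repairable.

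The genuine gap is the subcase where $\Gamma_2(v)$ is \emph{not} homogeneous, which you dispose of with a pattern-counting argument that does not work and which is in fact where the paper spends almost all of its effort. Fact~\ref{R_1^*<2} allows $|R^*_1(v^*)|=1$, and in that case every vertex of $\Gamma_2(v)$ has a neighbor in $R_1(v)=\bigcup_{u^*\in R^*_1(v)}u^*=r^*_1$, which is a single twin class; hence \emph{every} vertex of $\Gamma^*_2(v^*)$ is adjacent to $r^*_1$ and all of them receive the identical representation with respect to $R^*_1(v^*)$. Your landmarks in $R^*_1(v^*)$ then distinguish nothing inside $\Gamma^*_2(v^*)$, and a single additional landmark chosen inside $\Gamma^*_2(v^*)$ takes only two values on the remaining vertices, so it cannot separate three or four of them; moreover, if you delete all of $\Gamma^*_2(v^*)$ from the resolving set there is no landmark inside it at all. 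This is precisely why the paper introduces $N_1(x)=\Gamma_1(x)\cap\Gamma_2(v)$ and $N_2(x)=\Gamma_2(x)\cap\Gamma_2(v)$, proves each is homogeneous, bounds $|N^*_1(x)|\le 2$ and $|N^*_2(x)|\le 2$ with at most one of them equal to $2$, deduces $|\Gamma^*_2(v^*)|\le 4$, and then eliminates the value $4$ by a further case analysis that identifies forced twin pairs (such as $x^*$ and $z^*$), determines their types, and builds $(n-4)$-vertex resolving sets using landmarks drawn from \emph{inside} a class of type (K) --- an argument at the level of $G$, not of $G^*$, that no counting of adjacency patterns can replace. The parallel subcase $|R^*_1(v^*)|=2$ with $\Gamma_2(v)$ not homogeneous is likewise settled in the paper by showing $T=\Gamma_2(v)\setminus\{a\}$ is homogeneous and collapses to a single twin class, giving $|\Gamma^*_2(v^*)|=2$ outright. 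None of this structural work appears in your proposal, so the bound $|\Gamma^*_2(v^*)|\le 3$ is not established in the non-homogeneous case.
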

\begin{proof}{ If $\Gamma_2(v)$ is homogeneous, then  since every vertex
of $\Gamma^*_2(v^*)$ has a neighbor in $R^*_1(v^*)$ and
$\Gamma^*_2(v^*)$ is homogeneous, for distinct vertices
$x^*,y^*\in\Gamma^*_2(v^*)$, the sets $\Gamma^*_1(x^*)\bigcap
R^*_1(v^*)$ and $\Gamma^*_1(y^*)\bigcap R^*_1(v^*)$ are distinct
nonempty sets. Therefore, for each pair of vertices
$x^*,y^*\in\Gamma^*_2(v^*)$ there exist a vertex $r^*_1\in
R^*_1(v^*)$ such that $r^*_1$ resolves $x^*$ and $y^*$ in $G^*$.
Hence, $R^*_1(v^*)$ resolves all vertices of the set
$\Gamma^*_2(v^*)$. This implies that
$V(G^*)\backslash\Gamma^*_2(v^*)$ is a resolving set for $G^*$,
which yields $\beta(G^*)\leq n(G^*)-|\Gamma^*_2(v^*)|$.  On the
other hand, by Propositions~\ref{dim G^*<n-t}, we have
$\beta(G^*)\geq n(G^*)-3$. Thus, $|\Gamma^*_2(v^*)|\leq 3$.
\par Now let $\Gamma_2(v)$ is not
homogeneous. By Fact~\ref{R_1^*<2}, $|R_1^*(v^*)|\leq 2$. Thus, we
consider two cases, $|R_1^*(v^*)|=1$ and $|R_1^*(v^*)|=2$. In the
case $|R_1^*(v^*)|=1$, let $R^*_1(v^*)=\{r^*_1\}$, $r_1\in r^*_1$,
and for each $l\in\Gamma_2(v)$, $N_1(l)=\Gamma_1(l)\cap
\Gamma_2(v)$, and $N_2(l)=\Gamma_2(l)\cap \Gamma_2(v)$. Since
$\Gamma_2(v)$ is not homogeneous, there exists a vertex
$x\in\Gamma_2(v)$ such that both $N_1(x)$ and $N_2(x)$ are
nonempty sets. Let $a\in N_1(x)$ and $y\in N_2(x)$. Note that
$\Gamma^*_2(v^*)=\{x^*\}\cup N^*_1(x)\cup N^*_2(x)$, and $x$
resolves $a$ and $y$. Hence, if $N_1(x)$ is not homogeneous, then
there exist vertices $i,j,k\in N_1(x)$ such that $i\sim j$ and
$j\nsim  k$. Thus, $W^\prime=\{v,x,j\}$ resolves the
$\{i,k,y,r_1\}$, this contradiction yields $N_1(x)$ is
homogeneous. By a similar reason $N_2(x)$ is also homogeneous.
Note that all different neighbors of vertices in $N^*_1(x)$ are
in the set $N^*_2(x)$, because $N_1(x)$ is homogeneous and its
vertices share their neighbors  in $\Gamma_1(v)\cup\{x\}$.
Similarly every different neighbor of vertices in $N^*_2(x)$ are
in $N^*_1(x)$, hence,  $N^*_1(x)$ and $N^*_2(x)$ resolves each
other. Now let $W_1=N^*_2(x)\cup\{x^*\}$. If each vertex of
$N^*_1(x)$ has a none-neighbor vertex in $N^*_2(x)$, then the
representation of each vertex of $N^*_1(x)$ with respect to
$N^*_2(x)$ has a coordinate $2$, all coordinates of
$r(r^*_1|N^*_2(x))$ is $1$, and $r(v^*|N^*_2(x))$ is entirely
$2$. Consequently, $W_1$ resolves $N^*_1(x)\cup\{r^*_1,v^*\}$.
Thus, $\beta(G)=n-3$ implies that $|N^*_1(x)|\leq 1$. Moreover,
if there exist a vertex $a^*\in N^*_1(x)$ such that $a^*$ is
adjacent to all vertices of $N^*_2(x)$, then $N^*_1(x)$ has at
most two vertices. Otherwise, there are two distinct vertices
$b^*,c^*\in N^*_1(x)$ such that they are different from $a^*$,
and $r(b^*|N^*_2(x))$ and $r(c^*|N^*_2(x))$ are not entirely $1$,
and so $W_1$ resolves four vertices $a^*,~b^*,~c^*$, and $v^*$,
contrary to $\beta(G)=n-3$. Hence, $|N^*_1(x)|\leq 2$.
Furthermore, $|N^*_1(x)|= 2$ yields that there exist a vertex
$a^*\in N^*_1(x)$ such that $a^*$ is adjacent to all vertices of
$N^*_2(x)$. On a similar way $|N^*_2(x)|\leq 2$, moreover,
$|N^*_2(x)|= 2$ only if there exist a vertex $y^*\in N^*_2(x)$
such that $y^*$ is none-adjacent to all vertices of $N^*_1(x)$.
Thus, at most one of the sets $N^*_1(x)$ and $N^*_2(x)$ can have
two vertices, because it is impossible that there exist a pair of
vertices $a^*\in N^*_1(x),~y^*\in N^*_2(x)$ such that $a^*$ is
adjacent to all vertices of $N^*_2(x)$ and $y^*$ is none-adjacent
to all vertices of $N^*_1(x)$. Consequently
$|\Gamma^*_2(v^*)|\leq 4$. We claim that $|\Gamma^*_2(v^*)|=4$ is
impossible.
\par If $|\Gamma^*_2(v^*)|=4$,
then one of the two blow cases can be happened. \vspace{4mm}\\
1. $|N^*_1(x)|=2$ and $|N^*_2(x)|=1$. Let $N^*_1(x)=\{a^*,b^*\}$,
$N^*_2(x)=\{y^*\}$, $y^*\sim a^*$, $v\in v^*,a\in a^*,b\in
b^*,x\in x^*$ and $y\in y^*$. If $a^*\sim b^*$, then $x^*$ and
$b^*$ are twins, see Figure~\ref{3}(a). Since $x^*\sim b^*$, one
of them is of type (N). Note that $b^*$ is not of type (N),
because $N^*_1(x)$ is homogeneous and $a^*\sim b^*$. Hence, $b^*$
is of type (1K) and $x^*$ is of type (N). Thus, the set
$V(G)\backslash\{r_1,y,a,x\}$ is a resolving set for G of size
$n-4$, which is impossible. Therefore, $a^*\nsim b^*$, this gives
the set $V(G)\backslash\{v,r_1,x,a\}$  is a resolving set for $G$.
Which is a contradiction.\vspace{4mm}\\
2. $|N^*_1(x)|=1$ and $|N^*_2(x)|=2$. Let $N^*_1(x)=\{a^*\}$,
$N^*_2(x)=\{y^*,z^*\}$, $z^*\sim a^*$, $v\in v^*,a\in a^*,x\in
x^*,y\in y^*,z\in z^*$ and $S=\{x,y\}$. Thus, $r(z|S)=(2,*)$,
$r(a|S)=(1,2)$, $r(r_1|S)=(1,1)$ and $r(v|S)=(2,2)$, where $*=1$
or $2$. Note that $\beta(G)=n-3$ yields $*=2$, see
Figure~\ref{3}(b). Therefore, $x^*$ and $z^*$ are twins. Since
they are none-adjacent vertices, one of them is of type (K).
Clearly $z^*$ is not of type (K), otherwise, since $N^*_2(x)$ is
homogeneous, we have $*=1$, which is impossible. Consequently,
$x^*$ is of type (K), this implies that the set
$(x^*\backslash\{x\})\cup\{z,y\}$
resolves $\{v,r_1,a,x\}$. Thus, $\beta(G)\leq n-4$. \vspace{4mm}\\
These contradictions yield, $|\Gamma^*_2(v^*)|\leq 3$, when
$|R^*_1(v^*)|=1$.
\par To complete
the proof we need only to consider the case $|R^*_1(v^*)|=2$. In
this case, since all different neighbors of vertices in $R_1(v)$
are in $\Gamma_2(v)$, $|R^*_1(v^*)|=2$ implies that there exist a
vertex $a\in\Gamma_2(v)$ such that $\Gamma_2(a)\cap
R_1(v)\neq\emptyset$. Let $T=\Gamma_2(v)\backslash\{a\}$. Since
$\Gamma_2(v)$ is not homogeneous, it has at least three vertices,
hence $|T|\geq 2$. If $T$ is not homogeneous, then there exists a
vertex $i\in T$ such that $i$ resolves two vertices of $T$.
Moreover,  we know that $a$ resolves two vertices of $R_1(v)$.
Hence,  $\{v,i,a\}$ resolves at least four vertices of $G$. Thus,
we obtain a resolving set for $G$ of size $n-4$, which is
impossible. Therefore, $T$ is homogeneous.
\par If $a$ is adjacent to a vertex in $T$, then $a$
is adjacent to all vertices of $T$, otherwise $\{a,v\}$ resolves
four vertices. If a vertex $t\in T$ is not adjacent to some
vertices of $R_1(v)$, then similar to above, it can be seen that
$\Gamma_2(v)\backslash\{t\}$ is homogeneous and $t$ is adjacent or
none-adjacent to all vertices of $\Gamma_2(v)\backslash\{t\}$.
This implies that $\Gamma_2(v)$ is homogeneous, which is a
contradiction with the assumption. Hence, every vertex of $T$ is
adjacent to all vertices of $R_1(v)$. Therefore, all vertices of
$T$ are twins and form a vertex $b^*$ in $G^*$. Thus,
$\Gamma^*_2(v^*)$ consists of two vertices $a^*$ and $b^*$, where
$a^*$ is of type (1) and $b^*$ is of type (KN). }
\end{proof}
\vspace{-.7cm}\begin{figure}[ht]
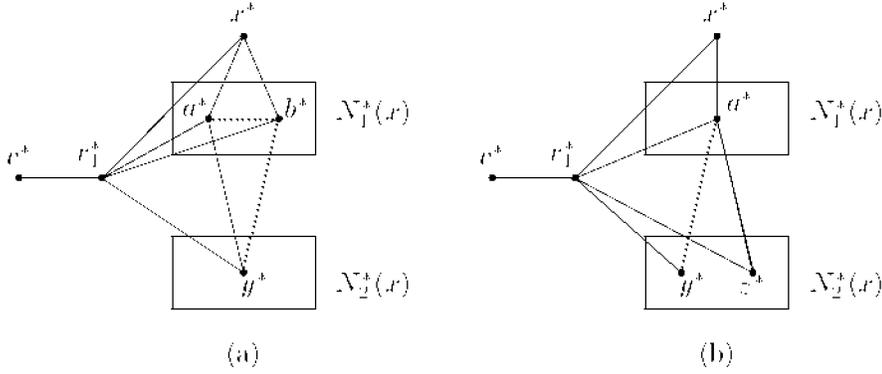
\hspace{1cm}
\vspace*{5cm}\special{em:graph fig2.bmp} \vspace*{1.5cm}
\vspace{-1.3cm}
\caption{\label{3} \footnotesize
Different cases of $N^*_1(x)$ and $N^*_2(x)$.}
\end{figure}
\begin{pro}\label{case2R^*=1} If $|R^*_1(v^*)|=1$, then $G^*$ satisfies one of the
structures $G_2$, $G_3$, or $G_7$.
\end{pro}
\begin{proof}{
Let $R^*_1(v^*)=\{r^*_1\}$. First let $\Gamma_2(v)$ is
homogeneous. Therefore, every vertex of $\Gamma^*_2(v^*)$ is
adjacent to $r^*_1$ and all vertices of $\Gamma^*_2(v^*)$ are
twins. Consequently, since $\Gamma_2(v)$ is homogeneous, all
vertices of $\bigcup_{u^*\in\Gamma^*_2(v^*)}u^*$ are twins. This
gives $|\Gamma^*_2(v^*)|=1$. Now, If $R^*_2(v^*)=\emptyset$, then
$G^*\cong P_3$ and $\alpha(G^*)\geq 2$, otherwise $\beta(G)=n-2$.
It is easy to check that in both cases $\alpha(G^*)=2$ and $3$, at
least one of the leaves is of type (K), otherwise $\beta(G)=n-2$.
Let $G^*=(x^*_1,x^*_2,x^*_3)$ and $x^*_1$ be of type (K). If
$x^*_2$ is of type (1K) and $x^*_3$ is of type (1), then
$\beta(G)=n-2$. This contradiction implies that, if $x^*_3$ is of
type (1), then $x^*_2$ is of type (N). That implies $G^*$
satisfies the structure $G_2$.
\par If $R^*_2(v^*)\neq\emptyset$,
then all vertices of $R^*_2(v^*)$ have a neighbor in $R^*_1(v^*)$,
otherwise $diam(G^*)\geq 3$. Since $\Gamma_1(v)$ is homogeneous,
every vertex of $R^*_2(v^*)$ is adjacent to every vertex of
$R^*_1(v^*)$. Hence, all vertices of $R^*_2(v^*)$ are twins. This
implies that $\Gamma_1(v)$ is a clique, all vertices of
$\bigcup_{u^*\in R^*_2(v^*)}u^*$ are twins and so
$|R^*_2(v^*)|=1$. In this case $G^*$ is the paw and one of the
degree-$2$ vertices is $v^*$ and the other belongs to
$\Gamma^*_1(v^*)$. Therefore, the structure of $G^*$ is as
Figure~\ref{(3),(4)}(a). Hence, $x^*$ and $v^*$ are adjacent
twins. Therefore, one of them is of type (N), otherwise $x^*=v^*$,
which contradicts $R^*_2(v^*)\neq\emptyset$. Since $\Gamma_1(v)$
is a clique, $x^*,y^*$ are of type (1K). Thus, $v^*$ is of type
(N). If $z^*$ is of type (K), then we choose arbitrary fixed
vertices $x\in x^*,v_1,v_2\in v^*,y\in y^*$ and $z_1,z_2\in z^*$.
Hence, $$r(v_1|\{v_2,z_2\})=(2,2),\, r(x|\{v_2,z_2\})=(1,2),\,
r(y|\{v_2,z_2\})=(1,1),\, r(z_1|\{v_2,z_2\})=(2,1).$$ Thus, the
set $V(G)\backslash\{x,y,z_1,v_1\}$ is a resolving set for $G$,
and $\beta(G)\leq n-4$. Consequently, $z^*$ is of type (1N).
Similarly if $x^*$ is of type (K) and $z^*$ is of type (N), then
$V(G)\backslash\{x,y,z_1,v_1\}$ is a resolving set for $G$ that is
a contradiction. Hence, $G^*$ satisfies the structure $G_3$.
\par Now let $\Gamma_2(v)$ is not homogeneous.
By the same notation as the proof of Fact~\ref{Gamma^*_2<3}, we
can see that the vertices of $N_1(x)$ can only be twins with each
other and $x$, also the vertices of $N_2(x)$ can only be twins
with each other, $x$, and $v$. By Lemma~\ref{R_2=empty}, we have
$R^*_2(v^*)=\emptyset$. Hence, if $|\Gamma^*_2(v^*)|=1$, then
$G^*\cong P_3$, all vertices of $N_2(x)$ are twins with $v$, and
all vertices of $N_1(x)$ are twins with $x$. Thus, $x^*$ is of
type (K), $v^*$ is of type (N), and $r^*_1$ is of any type. In
this case  $G^*$ satisfies the structure $G_2$.
\par When $|\Gamma^*_2(v^*)|=2$, there exist three cases.\vspace{4mm}\\
1. The vertex $x$ and all vertices of $N_1(x)$ are twins and there
exist some vertices in $N_2(x)$ which are not twins with $v$ and
they are twins by themselves. Thus, the vertices of
$N_2(x)\backslash v^*$ form exactly one vertex, $y^*$ in $G^*$.
Hence, $x^*$ is of type (K) and $x^*\nsim  y^*$. Therefore,
$y^*$ and $v^*$ are twins. Since $v^*\nsim y^*$, one of them is
of type (K). Note that, if $v^*$ is of type (K), then there exist
some vertex $u\in V(G)$ which is adjacent to all vertices of
$\Gamma_1(v)\cup\{v\}$ and is not adjacent to any vertex of
$\Gamma_2(v)$. Hence, $u\in R_2(v)$, which is impossible, because
by Lemma~\ref{R_2=empty}, $R_2(v)=\emptyset$. Thus, $y^*$ is of
type (K). Let $r_1\in r^*_1$, $y\in y^*$, $x\in x^*$ and $v\in
v^*$. Then $(x^*\backslash\{x\})\cup(y^*\backslash\{y\})$ resolves
$\{v,r_1,x,y\}$, this contradicts our assumption $\beta(G)=n-3$.
Consequently, this case does not happen.
\vspace{4mm}\\
2. There exist some vertices of $N_2(x)$ which are twins with $x$
and the rest are twins with $v$, and all vertices of $N_1(x)$ are
twins. Therefore,  the vertices of $N_1(x)$  create a vertex
$a^*$ in $G^*$, $a^*\sim x^*$, and $x^*$ is of type (N). Hence,
$G^*$ is the paw, with the leaf $v^*$, the degree-$3$ vertex
$r^*_1$, and degree-$2$ vertices $a^*$ and $x^*$. If $a^*$ is of
type (N), then $V(G)\backslash\{v,r_1,x,a\}$ is a resolving set
for $G$, where $r_1\in r^*_1,\- v\in v^*,\-x\in x^*$ and $a\in
a^*$. This contradiction yields $a^*$ is of type (1K). Because
$R_2(v)=\emptyset$, $v^*$ is not of type (K). Therefore,  $v^*$ is
of type (1N). By a similar method as before, we deduce that, if
$a^*$ is of type (K), then $v^*$ and $r^*_1$ can not be of type
(N). Thus, $G^*$ satisfies in structure $G_3$.
\vspace{4mm}\\
3. All vertices of $N_2(x)$ are twins with $v$, and there exist
some vertices in $N_1(x)$ which are not twins with $x$. Hence, the
vertices of $N_1(x)\backslash x^*$ form a unique vertex $a^*$ in
$G^*$, $a^*\sim x^*$, and consequently $G^*$ is the paw. Note that
$v^*$ is the leaf and its type is (N), the vertex $r^*_1$ has
degree $3$, and $x^*,\-a^*$ are degree-$2$ vertices. Since $x^*$
and $a^*$ are adjacent twins, one of them is of type (N). Also,
since all vertices of $N^*_2(x)$ are twins with $v$, $x^*$ is of
type (1K), and so $a^*$ is of type (N). Therefore, $G^*$ has the
structure $G_3$.
\par Finally, if $|\Gamma^*_2(v^*)|=3$, then we have three
following cases.\vspace{4mm}\\
1. Every vertex of $N_2(x)$ is twin with $v$ or $x$, and
$N^*_1(x)$ has two vertices $a^*$ and $b^*$. In this case $a^*$
and $b^*$ are twins, hence $a^*=b^*$, because $N_1(x)$ is
homogeneous. Thus, $|\Gamma^*_2(v^*)|\leq 2$. Therefore this is
not
the case.\vspace{4mm}\\
2. Every vertex of $N_1(x)$ is twin with $x$, and $N^*_2(x)$ has
two vertices $y^*$ and $z^*$. Hence,  $y^*$ and $z^*$ are twins.
Since $N_2(x)$ is homogeneous, $y^*=z^*$, which is a contradiction
with $|\Gamma^*_2(v^*)|=3$.\vspace{4mm}\\
3. There exist some vertices in $N_1(x)$ which are not twins with
$x$, also there exist some vertices in $N_2(x)$ which are neither
twins with $x$ nor $v$. In this case, each one of $N^*_1(x)$ and
$N^*_2(x)$ has exactly one vertex $a^*$ and $y^*$, respectively.
If $a^*\nsim y^*$, then $v^*$ and $y^*$ are none-adjacent twins.
Hence one of them is of type (K). Since $R_2(v)=\emptyset$, $v^*$
is of type (1N), and so $y^*$ is of type (K). Let $v\in v^*$,
$y\in y^*$, $a\in a^*$, and $x\in x^*$. Then
$V(G)\backslash\{y,x,r_1,v\}$ is a resolving set for $G$. This
contradiction yields $a^*\sim y^*$, and in consequence, $G^*$ is a
kite with a pendant edge, adjacent to a degree-$3$ vertex. Thus,
$y^*$ and $x^*$ are none-adjacent twins, hence one of them is of
type (K). By symmetry, we can assume  $x^*$ is of type (K). Since
$R_2(v)=\emptyset$, $v^*$ is of type (1N). As we see before,
$v^*,\-a^*$, and $r_1^*$ are not of type (N) and $y^*$ is not of
type (KN). Therefore, $G^*$ satisfies the structure $G_7$ and the
proof is completed.}\end{proof}
\vspace{-.7cm}\begin{figure}[ht]
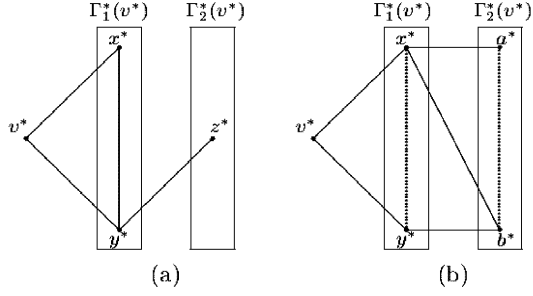
\hspace{3cm}
\vspace*{5cm}\special{em:graph fig3.bmp} \vspace*{1.5cm}
\vspace{-2.3cm}
\caption{\label{(3),(4)}
{\footnotesize$|\Gamma^*_2(v^*)|=1$ and $|\Gamma^*_2(v^*)|=2$.}}
\end{figure}
 Heretofore we have considered the case
$|R^*_1(v^*)|=1$. Now we investigate the case $|R^*_1(v^*)|=2$.
\begin{pro}\label{case2R^*=2} If $|R^*_1(v^*)|=2$
and $\Gamma_2(v)$ is not homogeneous, then $G^*$ satisfies the
structure $G_7$.
\end{pro}
\begin{proof}{  By the same notation as the proof of
Fact~\ref{Gamma^*_2<3}, let $\Gamma^*_2(v^*)=\{a^*,b^*\}$, where
$a^*$ is of type (1) and $b^*$ is of type (KN). Thus, if $a^*\sim
b^*$, then $b^*$ is of type (N), however $a^*\nsim b^*$ implies
that $b^*$ is of type (K), because $\Gamma_2(v)$ is not
homogeneous. Let $R_1^*(v^*)=\{x^*,y^*\}$. Since $R_1(v)$ has a
none-adjacent vertex to $a$, the vertex $a^*$ has exactly one
neighbor in $R^*_1(v^*)$. By the proof of Fact~\ref{Gamma^*_2<3},
$b^*\sim x^*$ and $b^*\sim y^*$, thus $G^*$ is the $4$-cycle,
$C_4=(v^*,x^*,b^*,y^*,v^*)$ with the pendant edge $x^*a^*$ and
possibly extra edges $a^*b^*$ and $x^*y^*$, see
Figure~\ref{(3),(4)}(b). Because $diam(G^*)=2$, at least one of
the edges $a^*b^*$ and $x^*y^*$ exists. If $a^*\sim b^*$, then
$b^*$ is of type (N). Let $v\in v^*$, $a\in a^*$, $b\in b^*$,
$x\in x^*$, and $y\in y^*$. Consequently, the set $a^*\cup
(b^*\backslash\{b\})$ resolves $\{v,x,b,y\}$, since $b^*$ is of
type (N). This contradiction yields $a^*\nsim  b^*$, and so
$x^*\sim y^*$, $a^*$ is of type (1), and $b^*$ is of type (K).
Note that $x^*$ and $y^*$ are not of type (N), otherwise
$\Gamma_1(v)$ is not homogeneous. Also, we can see easily that
$v^*$ is not of type (KN). In this case $G^*$ satisfies the
structure $G_7$.}
\end{proof}
Now, we need only to consider the case $|R^*_1(v^*)|=2$ when
$\Gamma_2(v)$ is homogeneous. In this case, if
$|\Gamma^*_2(v^*)|=1$, then all vertices of $R_1(v)$ are twins and
consequently, $|R^*_1(v)|=1$, which contradicts $|R^*_1(v^*)|=2$.
Therefore, $|\Gamma^*_2(v^*)|\geq 2$.
\begin{lemma}\label{Claim5}
If $\Gamma_2(v)$ is homogeneous and $|R^*_1(v^*)|=2$, then
$R^*_2(v^*)=\emptyset$.
\end{lemma}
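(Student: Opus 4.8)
The plan is a proof by contradiction. Suppose $R^*_2(v^*)\neq\emptyset$ and fix a class $r^*\in R^*_2(v^*)$ together with a vertex $r\in r^*$; since $r^*\in\Gamma^*_1(v^*)$ we have $r\sim v$. The first step is to show that $\Gamma_1(v)$ is a clique. Pick $w^*\in\Gamma^*_2(v^*)$ (nonempty in the situation at hand). Because $r^*$ has no neighbour in $\Gamma^*_2(v^*)$ we get $r^*\not\sim w^*$, so $d_{G^*}(r^*,w^*)=2$ and $r^*,w^*$ have a common neighbour $s^*$; as every neighbour of $r^*$ lies in $\{v^*\}\cup\Gamma^*_1(v^*)$ (using $d_{G^*}(r^*,v^*)=1$ and that $r^*$ has no neighbour in $\Gamma^*_2(v^*)$), while $s^*\sim w^*$ and $v^*\not\sim w^*$, we get $s^*\in\Gamma^*_1(v^*)$ and $s^*\neq r^*$. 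By Lemma~\ref{G^*=K_3} the classes $r^*$ and $s^*$ are contained in $\Gamma_1(v)$, so $\Gamma_1(v)$ is a homogeneous set containing an edge and hence is complete.

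Now split according to the type of $v^*$. If $v^*$ is of type \emph{(1K)}, then by Lemma~\ref{G^*=K_3} $\Gamma_2(v)=\bigcup_{u^*\in\Gamma^*_2(v^*)}u^*$, and I claim $v\equiv r$ in $G$: every $z\in\Gamma_1(v)\setminus\{r\}$ lies with $r$ in the clique $\Gamma_1(v)$, hence is adjacent to both $v$ and $r$; every $z\in\Gamma_2(v)$ lies in a class of $\Gamma^*_2(v^*)$ to which $r^*$ has no neighbour, hence is adjacent to neither $v$ nor $r$; and $V(G)=\{v\}\cup\Gamma_1(v)\cup\Gamma_2(v)$ since $diam(G)=2$. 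Thus $\Gamma_1(v)\setminus\{r\}=\Gamma_1(r)\setminus\{v\}$ and $v\sim r$, so $v\equiv r$ and therefore $r^*=v^*$ — impossible since $r^*\in\Gamma^*_1(v^*)$.

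If $v^*$ is of type \emph{(N)}, the preceding shortcut is unavailable, so instead I would exhibit a resolving set of cardinality $n-4$. Fix $v'\in v^*\setminus\{v\}$; being a non-adjacent twin of $v$ it satisfies $v'\in\Gamma_2(v)$ and $d(v',z)=d(v,z)$ for all $z\notin\{v,v'\}$. Since $|\Gamma^*_2(v^*)|\geq2$ (established just before this lemma) and $\Gamma_2(v)$, hence $\Gamma^*_2(v^*)$, is homogeneous, $R^*_1(v^*)=\{x^*,y^*\}$ resolves $\Gamma^*_2(v^*)$ in $G^*$; choose classes $a^*,b^*\in\Gamma^*_2(v^*)$ whose neighbourhoods inside $\{x^*,y^*\}$ are distinct and nonempty, with representatives $a,b,x,y$. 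Then I would verify that $W:=V(G)\setminus\{v',x,y,r\}$ resolves $G$: it contains $v,a,b$, and with respect to $\{v,a,b\}$ one computes $r(v'|\{v,a,b\})=(2,2,2)$ and $r(r|\{v,a,b\})=(1,2,2)$, while $r(x|\{v,a,b\})$ and $r(y|\{v,a,b\})$ have first coordinate $1$ and last two coordinates recording adjacency to $a^*$ and $b^*$; the choice of $a^*,b^*$ makes these two vectors distinct from each other and from $(1,2,2)$, so all four representations are distinct. Hence $\beta(G)\leq n-4$.

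In both cases we contradict $\beta(G)=n-3$, so $R^*_2(v^*)=\emptyset$. I expect the type-(N) branch to be the delicate point: there the genuine twins sit \emph{inside} the classes $v^*$ and $r^*$ rather than between $v^*$ and $r^*$, so one must produce a resolving set explicitly and check it against each of the (essentially three) configurations of the neighbourhoods of $a^*,b^*$ within $\{x^*,y^*\}$; the remainder is routine bookkeeping with Lemma~\ref{G^*=K_3} and the definitions of $R^*_1$ and $R^*_2$.
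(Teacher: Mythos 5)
Your proposal is correct and follows essentially the same route as the paper: your type-(1K) branch (showing $v\equiv r$ directly) is the same content as the paper's observation that $r_2^*$ and $v^*$ are adjacent twins forcing $v^*$ to be of type (N), and your final resolving set $V(G)\setminus\{v',x,y,r\}$ certified by $\{v,a,b\}$ is the paper's $V(G)\setminus\{v_2,x,y,r_2\}$ certified by $T=\{v_1,a,b\}$. Your explicit check of the three possible neighbourhood configurations of $a^*,b^*$ in $\{x^*,y^*\}$ is slightly more careful than the paper's ``without loss of generality'' shortcut, but the argument is the same.
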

\begin{proof} { By Fact~\ref{Gamma^*_2<3} and above argument, we
have $2\leq |\Gamma^*_2(v^*)|\leq 3$. Suppose on the contrary that
$R^*_2(v^*)\neq\emptyset$. Since $diam(G^*)=2$ and $\Gamma_1(v)$
is homogeneous, every vertex of $R^*_2(v^*)$ is adjacent to every
vertex of $R^*_1(v^*)$. In this way all vertices of $R^*_2(v^*)$
are twins, therefore all vertices of $\bigcup_{u^*\in
R^*_2(v^*)}u^*$ are twins, and so $|R^*_2(v^*)|=1$. Let
$R^*_2(v^*)=\{r^*_2\}$, $R^*_1(v^*)=\{x^*,y^*\}$, and
$\{a^*,b^*\}\subseteq\Gamma^*_2(v^*)$. Therefore, $r^*_2$ is
adjacent to the vertices $v^*,x^*$ and $y^*$. Note that
$\Gamma_1(v)$ and $\Gamma^*_1(v^*)$ are cliques, because
$\Gamma_1(v)$ is homogeneous and $r^*_2$ is adjacent to $x^*$ and
$y^*$. Thus, $r^*_2,x^*$ and $y^*$ are of type (1K). Since all
neighbors of $r^*_2$ and $v^*$ are shared, $r^*_2$ and $v^*$ are
adjacent twins. For $r^*_2\neq v^*$, one of them is of type (N).
Because $r^*_2$ is of type (1K), $v^*$ is of type (N).
We choose arbitrary fixed vertices $v_1,v_2\in v^*,r_2\in
r^*_2,x\in x^*,y\in y^*,a\in a^*$ and $b\in b^*$ and set
$T=\{v_1,a,b\}$. Since $a^*\neq b^*$ and $\Gamma_2(v)$ is
homogeneous, one of the vertices $a^*$ and $b^*$has only one
neighbor in $R^*_1(v^*)$ and the other have one or two. Without
loss of generality, we can assume that $x^*$ is the only neighbor
of $a^*$ in $R^*_1(v^*)$, see Figure~\ref{2}(b). Thus,
 $$r(v_2|T)=(2,2,2),\quad r(x|T)=(1,1,*),\quad
r(r_2|T)=(1,2,2),\quad r(y|T)=(1,2,1),$$ where $*$ is $1$ or $2$.
However, the four above representations are distinct. This yields
the contradiction $\beta(G)\leq n-4$. Therefore,
$R^*_2(v^*)=\emptyset$. }\end{proof}
\begin{lemma}\label{claim3}
If $\Gamma_2(v)$ is homogeneous and $|\Gamma^*_2(v^*)|=3$, then
there is exactly one vertex $a^*\in\Gamma^*_2(v^*)$ such that
$a^*$ is adjacent to all vertices of $R^*_1(v^*)$.
\end{lemma}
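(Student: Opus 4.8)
We have $\Gamma_2(v)$ homogeneous, $|R^*_1(v^*)|=2$, say $R^*_1(v^*)=\{x^*,y^*\}$, and $|\Gamma^*_2(v^*)|=3$. By Lemma~\ref{Claim5} we know $R^*_2(v^*)=\emptyset$, so $\Gamma^*_1(v^*)=R^*_1(v^*)=\{x^*,y^*\}$. Since $diam(G^*)=2$, every vertex of $\Gamma^*_2(v^*)$ is adjacent to $x^*$ or $y^*$ (or both). Since $\Gamma_1(v)$ is homogeneous and $x^*\sim y^*$ (as $x^*,y^*\in\Gamma_1(v)$... actually one must check $x^*\sim y^*$: both lie in $\Gamma_1(v)$ which is homogeneous and nonempty, and since $R^*_2(v^*)=\emptyset$ the set $\Gamma^*_1(v^*)$ is contained in $R^*_1(v^*)$ which, being a clique argument from Lemma~\ref{Claim5}-type reasoning, forces $x^*\sim y^*$; alternatively this was established when $|R^*_1(v^*)|=2$). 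Write $\Gamma^*_2(v^*)=\{a^*,b^*,c^*\}$ and assign to each of these three vertices its "neighbor set in $R^*_1(v^*)$", a nonempty subset of $\{x^*,y^*\}$; the three possible values are $\{x^*\}$, $\{y^*\}$, $\{x^*,y^*\}$.

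**Main argument.** The plan is to show the three neighbor sets are pairwise distinct, and then read off the conclusion. Since $\Gamma_2(v)$ is homogeneous, $G[\Gamma^*_2(v^*)]$ is complete or empty; in either case, if two vertices of $\Gamma^*_2(v^*)$, say $a^*$ and $b^*$, had the same neighbor set in $R^*_1(v^*)$, then $a^*$ and $b^*$ would have identical neighborhoods in all of $G^*$ (they agree on $v^*$—both nonadjacent to $v^*$—on $R^*_1(v^*)$ by assumption, and on $\Gamma^*_2(v^*)\setminus\{a^*,b^*\}$ since that set is homogeneous and both $a^*,b^*$ sit inside the homogeneous block). Hence $a^*$ and $b^*$ would be twins in $G^*$, contradicting $a^*\neq b^*$ as vertices of the twin graph. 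Therefore the three neighbor sets $\{x^*\}$, $\{y^*\}$, $\{x^*,y^*\}$ occur exactly once each among $a^*,b^*,c^*$, and in particular exactly one of them—the one whose neighbor set is all of $\{x^*,y^*\}=R^*_1(v^*)$—is adjacent to all vertices of $R^*_1(v^*)$. Renaming that vertex $a^*$ gives the statement.

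**The obstacle and how I would handle it.** The one step requiring care is the claim that two elements of $\Gamma^*_2(v^*)$ with equal neighbor set in $R^*_1(v^*)$ are genuinely twins in $G^*$—this needs that they have the same neighbors everywhere, which relies on (i) $\Gamma^*_2(v^*)$ being homogeneous (from $\Gamma_2(v)$ homogeneous, as used repeatedly via Lemma~\ref{G^*=K_3}), (ii) neither being adjacent to $v^*$ (true since they are at distance $2$ from $v^*$), and (iii) $R^*_2(v^*)=\emptyset$ so there are no other vertices to check. Once these three points are assembled, the twin conclusion is immediate from the definition of the twin relation, and since distinct vertices of $G^*$ are never twins, we get the pairwise-distinctness of neighbor sets for free. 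I would also note briefly that all three values must actually be attained (not merely that they are distinct): three distinct nonempty subsets of a $2$-element set must be exactly $\{x^*\}$, $\{y^*\}$, and $\{x^*,y^*\}$, so the "all of $R^*_1(v^*)$" case is forced to occur, establishing existence as well as uniqueness of $a^*$.
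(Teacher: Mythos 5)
Your proof is correct in substance but takes a genuinely different route from the paper's for the existence half of the claim. The paper argues by contradiction with the metric dimension: if no vertex of $\Gamma^*_2(v^*)$ were adjacent to all of $R^*_1(v^*)$, then every vertex of $\Gamma^*_2(v^*)$ would have a coordinate $2$ in its representation with respect to $R^*_1(v^*)$ while $r(v^*|R^*_1(v^*))$ is all $1$'s; combined with the already-established fact that $R^*_1(v^*)$ resolves $\Gamma^*_2(v^*)$, this makes $R^*_1(v^*)$ resolve the four vertices $\Gamma^*_2(v^*)\cup\{v^*\}$, so $\beta(G^*)\leq n(G^*)-4$ and Proposition~\ref{dim G^*<n-t} gives $\beta(G)\leq n-4$, a contradiction. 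You instead count: the three vertices of $\Gamma^*_2(v^*)$ carry pairwise distinct nonempty neighbor sets inside the two-element set $R^*_1(v^*)$, so all three nonempty subsets occur and exactly one vertex gets the full set. Your argument is more elementary and delivers existence and uniqueness in one stroke, but it leans on $|R^*_1(v^*)|=2$, which is not in the lemma's hypotheses and must be imported from the surrounding discussion (it does follow: $\Gamma_2(v)$ homogeneous with $|\Gamma^*_2(v^*)|\geq 2$ forces $|R^*_1(v^*)|\geq 2$, and Fact~\ref{R_1^*<2} gives the upper bound); the paper's argument needs no such bound. The uniqueness step is the same twin argument in both.

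One step of yours needs more care: you assert that ``distinct vertices of $G^*$ are never twins'' in $G^*$. That is false in general --- for $G=C_4$ the twin graph is $K_2$ and its two vertices are adjacent twins of each other. Being twins in $G^*$ only forces $a^*=b^*$ after one checks that the types are compatible, so that representative vertices $a\in a^*$, $b\in b^*$ are twins in $G$ itself. Here this is rescued by the homogeneity of $\Gamma_2(v)$ in $G$ (not merely of $\Gamma^*_2(v^*)$ in $G^*$): if $\Gamma_2(v)$ is a clique then $a^*,b^*$ are adjacent and both of type (1K), and if it is independent they are non-adjacent and both of type (1N); in either case $a$ and $b$ are twins in $G$ and hence $a^*=b^*$. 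This is exactly the extra clause the paper inserts in the analogous step of Lemma~\ref{claim4} (``because $\Gamma_2(v)$ is homogeneous''), and your proof should include it.
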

\begin{proof} { Let $|\Gamma^*_2(v^*)|=3$. We have seen that $R^*_1(v^*)$
resolves the set $\Gamma^*_2(v^*)$. Suppose on the contrary that
there is not any vertex of $\Gamma^*_2(v^*)$ adjacent to all
vertices of $R^*_1(v^*)$. Hence, at least one coordinate of the
representation of each vertex in $\Gamma^*_2(v^*)$, with respect
to $R^*_1(v^*)$ is 2. While every coordinate of
$r(v^*|R^*_1(v^*))$ is 1. Therefore, $R^*_1(v^*)$ is a resolving
set for $G^*[R^*_1(v^*)\cup\Gamma^*_2(v^*)\cup\{v^*\}]$ with
cardinality $n(G^*)-4$, since $|\Gamma^*_2(v^*)\cup \{v^*\}|=4$.
It follows that $\beta(G^*)\leq n(G^*)-4$, and
Proposition~\ref{dim G^*<n-t} implies that $\beta(G)\leq n-4$,
which is a contradiction. Therefore, there exists a vertex
$a^*\in\Gamma^*_2(v^*)$ adjacent to all vertices of $R^*_1(v^*)$.
If there exists another vertex $b^*\in\Gamma^*_2(v^*)$ adjacent to
all of $R^*_1(v^*)$, then $a^*$ and $b^*$ are twins, since
$\Gamma^*_2(v^*)$ is homogeneous. This implies that $a^*=b^*$
while $|\Gamma^*_2(v^*)|=3$. Therefore, such a vertex in
$\Gamma^*_2(v^*)$ is unique. }
\end{proof}
\begin{lemma}\label{claim4}
If $\Gamma_2(v)$ is homogeneous and $|R^*_1(v^*)|=2$, then
$|\Gamma^*_2(v^*)|\leq 2$.
\end{lemma}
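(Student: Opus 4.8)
The plan is to argue by contradiction: assuming $|\Gamma^*_2(v^*)|=3$ I will pin down $G^*$ completely — it will turn out to have exactly six vertices — exhibit a resolving set of $G^*$ of cardinality $n(G^*)-4$, and then apply Proposition~\ref{dim G^*<n-t} to obtain $\beta(G)\le n-4$, contradicting $\beta(G)=n-3$. Since Fact~\ref{Gamma^*_2<3} already gives $|\Gamma^*_2(v^*)|\le 3$, ruling out $|\Gamma^*_2(v^*)|=3$ is all that is needed.

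First I would fix the structure of $G^*$. As $\Gamma_2(v)$ is homogeneous and $|R^*_1(v^*)|=2$, Lemma~\ref{Claim5} gives $R^*_2(v^*)=\emptyset$, so $\Gamma^*_1(v^*)=R^*_1(v^*)=\{x^*,y^*\}$; combined with $diam(G^*)=2$ this forces $V(G^*)=\{v^*,x^*,y^*\}\cup\Gamma^*_2(v^*)$, a set of exactly six vertices. Write $\Gamma^*_2(v^*)=\{a^*,b^*,c^*\}$, where by Lemma~\ref{claim3} $a^*$ is the unique vertex of $\Gamma^*_2(v^*)$ adjacent to both $x^*$ and $y^*$. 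Each of $a^*,b^*,c^*$ is at distance $2$ from $v^*$, hence has a neighbour in $\Gamma^*_1(v^*)=\{x^*,y^*\}$; and if two of them had the same neighbourhood inside $\{x^*,y^*\}$, they would be twins in $G^*$ (their adjacencies inside the homogeneous set $\Gamma^*_2(v^*)$ agreeing, and both being non-adjacent to $v^*$), hence equal, which is impossible. So $\Gamma^*_1(a^*)\cap\{x^*,y^*\}$, $\Gamma^*_1(b^*)\cap\{x^*,y^*\}$, $\Gamma^*_1(c^*)\cap\{x^*,y^*\}$ are the three distinct nonempty subsets $\{x^*,y^*\},\{x^*\},\{y^*\}$ of $\{x^*,y^*\}$; since $a^*$ realises the first, after relabelling we have $b^*\sim x^*$, $b^*\nsim y^*$, $c^*\sim y^*$ and $c^*\nsim x^*$.

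Next I would show $\Gamma^*_2(v^*)$ is a clique. Being homogeneous, it is complete or empty; if it were empty, then $N_{G^*}(b^*)=\{x^*\}$ and $N_{G^*}(c^*)=\{y^*\}$ would share no vertex and $b^*\nsim c^*$, so $d_{G^*}(b^*,c^*)\ge 3$, contradicting $diam(G^*)=2$. Hence $a^*,b^*,c^*$ are pairwise adjacent, and $G^*$ is now determined except possibly for the edge $x^*y^*$. Finally, taking $S^*=\{b^*,c^*\}$, the remaining four vertices have representations $r(v^*|S^*)=(2,2)$, $r(x^*|S^*)=(1,2)$, $r(y^*|S^*)=(2,1)$ and $r(a^*|S^*)=(1,1)$, the two cross distances equalling $2$ through the common neighbour $a^*$ regardless of whether $x^*y^*$ is an edge. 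These four vectors are pairwise distinct, so $S^*$ resolves $G^*$, giving $\beta(G^*)\le 2=n(G^*)-4$ and hence $\beta(G)\le n-4$ by Proposition~\ref{dim G^*<n-t}, a contradiction. Therefore $|\Gamma^*_2(v^*)|\le 2$.

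The distance computations are routine. The delicate step is the structural reduction of the second and third paragraphs — in particular excluding the case that $\Gamma^*_2(v^*)$ is an independent set, which is exactly the point at which the hypothesis $diam(G^*)=2$ is used and is the main obstacle.
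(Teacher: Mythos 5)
Your proof is correct, and its skeleton coincides with the paper's: both invoke Lemma~\ref{Claim5} to get $R^*_2(v^*)=\emptyset$ and Lemma~\ref{claim3} to single out $a^*$, both show $b^*$ and $c^*$ must attach to different vertices of $R^*_1(v^*)$ (using homogeneity of $\Gamma_2(v)$ to force would-be twins to coincide), and both dispose of the case where $\Gamma^*_2(v^*)$ is a clique by observing that $\{b^*,c^*\}$ resolves $\{v^*,x^*,y^*,a^*\}$ in $G^*$ and then applying Proposition~\ref{dim G^*<n-t}. Where you genuinely diverge is the remaining subcase, $\Gamma^*_2(v^*)$ independent: you kill it immediately by noting $N_{G^*}(b^*)=\{x^*\}$ and $N_{G^*}(c^*)=\{y^*\}$ give $d_{G^*}(b^*,c^*)\geq 3$, contradicting $diam(G^*)=2$. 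The paper instead carries this case further, arguing that $v^*$ and $a^*$ are twins in $G^*$ so that $v^*$ must be of type (K), and then exhibiting an explicit $(n-4)$-vertex resolving set $V(G)\backslash\{v_2,x,y,b\}$ in $G$ itself. Your diameter observation is valid (the paper uses exactly this style of argument elsewhere, e.g.\ to exclude $z^*$ in Proposition~\ref{claim 3.1}) and is shorter; what the paper's longer route buys is nothing essential here, so your version is a clean simplification. One small wording caveat: when you rule out $b^*$ and $c^*$ having the same neighbourhood in $\{x^*,y^*\}$, the phrase ``twins in $G^*$, hence equal'' is not literally sufficient (distinct vertices of $G^*$ can be twins in $G^*$); what makes them equal is that the underlying vertices $b\in b^*$, $c\in c^*$ are twins in $G$, which follows from the homogeneity of $\Gamma_2(v)$ exactly as your parenthetical indicates and as the paper spells out.
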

\begin{proof} {On the contrary, suppose
$|\Gamma^*_2(v^*)|=3$.  By Lemma~\ref{claim3}, there exists
exactly one vertex $a^*\in\Gamma^*_2(v^*)$ such that $a^*$ is
adjacent to all vertices of $R^*_1(v^*)$. Let
$R^*_1(v^*)=\{x^*,y^*\}$ and $\Gamma^*_2(v^*)=\{a^*,b^*,c^*\}$.
Each one of vertices $b^*$ and $c^*$ has at least one neighbor in
$R^*_1(v^*)$ and by Lemma~\ref{claim3}, they have exactly one
neighbor in $R^*_1(v^*)$. If their neighbors in $R^*_1(v^*)$ are
same, then they are twins, since $\Gamma^*_2(v^*)$ is homogeneous.
This implies that every pair of vertices $b\in b^*$ and $c\in c^*$
are twins (because $\Gamma_2(v)$ is homogeneous) consequently,
$b^*=c^*$, which contradicts $|\Gamma^*_2(v^*)|=3$. Thus, one of
them, say $b^*$, is (only) adjacent to $x^*$ and the other $c^*$,
is (only) adjacent to $y^*$, see Figure~\ref{2}(a) (dotted edges
may be exist or not).
Now$$r(v^*|\{b^*,c^*\})=(2,2), \, r(x^*|\{b^*,c^*\})=(1,2), \,
r(y^*|\{b^*,c^*\})=(2,1), \, r(a^*|\{b^*,c^*\})=(*,*),$$where $*$
is $1$ or $2$. If $*=1$, then $r(a^*|\{b^*,c^*\})=(1,1)$, and so
$V(G^*)\backslash\{a^*,v^*,x^*,y^*\}$ is a resolving set of size
$n(G^*)-4$ for $G^*$, this contradiction yields $*=2$. Hence
$\Gamma_2(v)$ and $\Gamma^*_2(v^*)$ are independent sets, since
$\Gamma_2(v)$ is homogeneous.
\par Since $R^*_2(v^*)=\emptyset$, if $v^*$ is of type (1N),
then $v^*$ and $a^*$ are twins and every pair of vertices $v\in
v^*$ and $a\in a^*$ are twins (because both $a^*$ and $v^*$ are of
type (1N)), and so $v^*=a^*$, that is a contradiction. Therefore,
$v^*$ is of type (K). For arbitrary fixed vertices $v_1,v_2\in
v^*,x\in x^*,y\in y^*,a\in a^*,b\in b^*$ and $c\in c^*$ and
$T=\{v_1,a,c\}$, we have
$$r(v_2|T)=(1,2,2),\quad r(x|T)=(1,1,2),\quad r(y|T)=(1,1,1),\quad
r(b|T)=(2,2,2).$$ Hence,  $V(G)\backslash\{v_2,x,y,b\}$ is a
resolving set for $G$, with cardinality $n-4$. This contradiction
implies that $|\Gamma^*_2(v^*)|\leq 2$.}
\end{proof}

\vspace{-.7cm}\begin{figure}[ht]
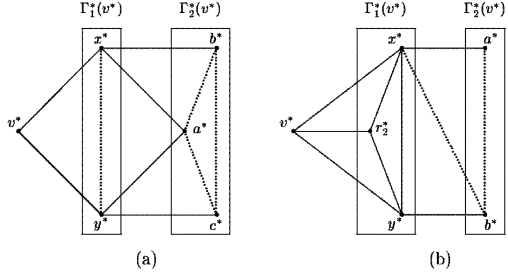
\hspace{3cm}
\vspace*{5cm}\special{em:graph fig4.bmp} \vspace*{1.5cm}
\vspace{-2.3cm}
\caption{\label{2} \footnotesize$|\Gamma^*_2(v^*)|=3$ and
$|\Gamma^*_1(v^*)|=3$.}
\end{figure}

On account of the above results, we need only assume that
$|\Gamma^*_2(v^*)|=|R^*_1(v^*)|=2$ and $|R^*_2(v^*)|\leq~1$.

\begin{pro}
If $|R^*_1(v^*)|=2$ and $\Gamma_2(v)$ is homogeneous, then $G^*$
satisfies one of the structures $G_4$ to $G_7$.
\end{pro}
\begin{proof}{
 Let $R^*_1(v^*)=\{x^*,y^*\}$,
$\Gamma^*_2(v^*)=\{a^*,b^*\}$. Then $G^*$ is as described in
Figure~\ref{4}. If $a^*\nsim b^*$, then $x^*\sim y^*$ and $x^*\sim
b^*$, otherwise $diam(G^*)=3$, a contradiction. Let $G_0$ be the
path $(a^*,x^*,v^*,y^*,b^*)$. Thus, $G^*$ must be one of the
following five graphs:
\begin{description}
\item $H^*_1:=G_0+a^*b^*$, \item $H^*_2:=G_0+a^*b^*+x^*b^*$, \item
$H^*_3:=G_0+a^*b^*+x^*y^*$, \item
$H^*_4:=G_0+a^*b^*+x^*b^*+x^*y^*$, \item
$H^*_5:=G_0+x^*b^*+x^*y^*$.
\end{description}
We fix the vertices $v\in v^*,x\in x^*,y\in y^*,a\in a^*$ and
$b\in b^*$ in each $H^*_i$, $1\leq i\leq 5$. Note that $H^*_1\cong
C_5$. If $G^*\cong H^*_1$, then all vertices of $H^*_1$ are of
(1), otherwise (with a simple computation) $\beta(G)\leq n-4$. In
this case $G^*$ satisfies structure $G_4$.
\par When $G^*\cong
H^*_2$, $x^*$ and $y^*$ are not of type (K), because $\Gamma_1(v)$
is homogeneous and $x^*\nsim y^*$. Similarly $a^*$ and $b^*$ are
not of type (N). If $x^*$ or $y^*$ is of type (N), then
$V(G)\backslash\{v,x,y,b\}$ is a resolving set for $G$, with
cardinality $n-4$. Also $v^*$ of type (N) or (K) yields
$V(G)\backslash\{v,x,y,b\}$ or $V(G)\backslash\{v,x,a,b\}$ is a
resolving set for $G$  of size $n-4$, respectively. These
contradictions show that $G^*$ satisfies the structure $G_5$ if
$G^*\cong H^*_2$.
\par Let $G^*\cong H^*_3$. Since $\Gamma_1(v)$
and $\Gamma_2(v)$ are homogeneous, $x^*\sim y^*$ and $a^*\sim b^*$
imply that $x^*,y^*,a^*,b^*$ are not of type (N). If $a^*$ or
$b^*$ is of type (K), then $V(G)\backslash\{v,x,y,a\}$ or
$V(G)\backslash\{x,y,v,b\}$ is an $(n-4)$-vertex resolving set for
$G$, respectively. Also $v^*$ of type (N) yields the resolving
set, $V(G)\backslash\{x,y,v,b\}$ for $G$. These contradictions
imply that $G^*$ has  the structure $G_5$.
\par If $G^*\cong H^*_4$
and one of the vertices $v^*$ or $y^*$ is of type (N), then the
set $V(G)\backslash\{v,x,y,b\}$ or $V(G)\backslash\{x,y,a,b\}$ is
a resolving set for $G$ of cardinality $n-4$. Thus $v^*$ and $y^*$
are of type (1K). By symmetry, the vertices $a^*$ and $b^*$ are of
type (1K), too.  If non-adjacent vertices $v^*$ and $b^*$ are of
type (K), then the set $V(G)\backslash\{v,x,y,b\}$ is a resolving
set of size $n-4$. Similarly none-adjacent vertices  $a^*$ and
$y^*$ are not of type (K) simultaneously. Also, if none-adjacent
vertices $a^*$ and $v^*$ are of type (K), then
$V(G)\backslash\{v,x,y,a\}$ resolves $G$, which is impossible.
Therefore,  none-adjacent vertices are not of the same type (K).
Moreover, if $x^*$ is of type (N), and $y^*$ or $v^*$ is of type
(K), then $V(G)\backslash\{v,x,y,a\}$ is a resolving set for $G$
of cardinality $n-4$. By the same reason, if $x^*$ is of type (N),
then $a^*$ and $b^*$ are not of type (K). Thus, $G^*$ satisfies
the structure $G_6$.
\par Finally, assume that $G^*\cong H^*_5$. Since $v^*\neq b^*$
and these vertices are none-adjacent twins in $H^*_5$, at least
one of them is of type (K). Hence, $v^*$ is of type (K) and $b^*$
is of type (1N), because $\Gamma_2(v)$ is homogeneous and
$a^*\nsim b^*$. If $b^*$ is of type (N), then
$V(G)\backslash\{v,x,y,b\}$ resolves $G$, a contradiction. It
follows that $b^*$ is of type (1). By the similar way as before,
one can see that $a^*$ is of type (1), and both $x^*$ and $y^*$
are of type (1K), and thus $G^*$ satisfies the structure $G_7$.}
\end{proof}
\begin{figure}[ht]
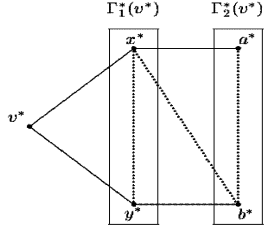
\hspace{4cm}
\vspace*{5cm}\special{em:graph fig5.bmp} \vspace*{1.5cm}
\vspace{-2.8cm}
\caption{\label{4} \footnotesize
$|\Gamma^*_1(v^*)|=|\Gamma^*_2(v^*)|=2$ and both $\Gamma^*_1(v^*)$
and $\Gamma^*_2(v^*)$ are homogeneous.}
\end{figure}


\noindent{\bf Case 2.} For each vertex $v\in V(G)$ with
$\Gamma^*_2(v^*)\neq\emptyset$, $\Gamma_1(v)$ is not homogeneous.
\vspace{4mm}\\
We choose a fix vertex $v\in V(G)$ with the property
$\Gamma^*_2(v^*)\neq\emptyset$. Lemma~\ref{one homogeneous}
concludes that in this case, $\Gamma_2(v)$ is homogeneous. For
each vertex $x\in\Gamma_1(v)$, let
$M_1(x):=\Gamma_1(v)\cap\Gamma_1(x)$ and
$M_2(x):=\Gamma_1(v)\cap\Gamma_2(x)$. Since
$M_2(x)\subseteq\Gamma_2(x)$ and $\Gamma_2(x)$ is homogeneous,
$M_2(x)$ is also homogeneous. If $M_1(x)$ is not homogeneous, then
there exist vertices $i$, $j$, and $k$ in $M_1(x)$ such that
$i\sim j$ and $k\nsim  j$. Thus, for each pair of vertices $y\in
M_2(x)$ and $c\in\Gamma_2(v)$, we have $r(i|\{v,x,j\})=(1,1,1)$,
$r(k|\{v,x,j\})=(1,1,2)$, $r(y|\{v,x,j\})=(1,2,*)$,
$r(c|\{v,x,j\})=(2,*_1,*_2)$, where $*$, $*_1$ and $*_2$ are $1$
or $2$. However, these representations are distinct, which is a
contradiction. Therefore, $M_1(x)$ is homogeneous.
\begin{pro}\label{claim 3.1} If there exist a vertex
$x\in R_2(v)$ with $M_2(x)\neq\emptyset$, then $G^*$ satisfies the
structure $G_6$.
\end{pro}
\begin{proof} { Since $x\in R_2(v)$, we have
$\Gamma_1(x)=M_1(x)\cup\{v\}$. Note that $v$ is adjacent to all
vertices of $M_1(x)$. Since $M_1(x)$ is homogeneous and
$\Gamma_1(x)$ is not homogeneous, we conclude $M_1(x)$ is an
independent set and contains at least two vertices. Now let $m_1$
and $m_2$ be two arbitrary vertices in $M_1(x)$. Thus, $m_1$
resolves $m_2$ and $v$, hence $m_1$ can not resolve any pair of
vertices in $\Gamma_2(x)$, otherwise the set $\{x,m_1\}$ resolves
at least four vertices. Therefore, $m_1$ is adjacent to all
vertices of $\Gamma_2(x)$ or none-adjacent to all of them. Since
$m_1$ is an arbitrary vertex of $M_1(x)$, all vertices of
$\Gamma_2(x)$ have same neighbors in $M_1(x)$. Note that
$\Gamma_2(x)=M_2(x)\cup\Gamma_2(v)$, because $x\in R_2(v)$. Also
all vertices of $M_2(x)$ are adjacent to $v$, and all vertices of
$\Gamma_2(v)$ are not adjacent to $v$. Thus, every pair of
vertices in $M_2(x)$ and also every pair of  vertices of
$\Gamma_2(v)$ are twins. Let $t^*=M_2(x)$ and $s^*=\Gamma_2(v)$ be
the corresponding vertices in $G^*$. Moreover,  the vertices of
$M_1(x)$ that are adjacent to all of $\Gamma_2(x)$ are twins and
form a vertex $y^*$ in $G^*$, also the remaining vertices of
$M_1(x)$ are twins with each other and create a vertex $z^*$ in
$G^*$. Therefore, $G^*$ has at most six vertices $v^*$, $x^*$,
$y^*$, $z^*$, $t^*$, and $s^*$, where $x^*$ is adjacent to $v^*$,
$z^*$, and $y^*$. Also, $v^*$ and $y^*$ are adjacent to all
vertices except $s^*$ and $z^*$, respectively. There is no other
edge in $G^*$ except possibly $s^*t^*$, see Figure~\ref{5}(a).
\par If all
of these six vertices exist, then $d(z^*,s^*)=3$, which
contradicts $diam(G^*)=2$. Since $s^*=\Gamma_2(v)$, the vertex
$z^*$ does not exist. It is clear that $y^*$ is of type (N),
because $M_1(x)$ is an independent set of size at least two. Let
$v\in v^*$, $x\in x^*$, $y_1,y_2\in y^*$, $s\in s^*$, and $t\in
t^*$. If $s^*\nsim t^*$, then $v^*\cup x^*\cup t^*\subseteq
\Gamma_2(s)$. But the set $v^*\cup x^*\cup t^*$ is not
homogeneous, and so $\Gamma_2(s)$ is not homogeneous, this
contradiction yields $s^*\sim t^*$. Thus, since $s^*\cup
t^*\subseteq \Gamma_2(x)$, the adjacent vertices $s^*$ and $t^*$
are of type (1K). Moreover, $x^*\cup v^*\subseteq \Gamma_2(s)$
and $x^*\sim v^*$ yields $x^*$ and $v^*$ are of type (1K). But as
we see before, when $y^*$ is of type (N) the other vertices can
not be of type (K), hence they must be of type (1), also, two
none-adjacent vertices are not of the same type (K). Therefore,
$G^*$ satisfies the structure $G_6$. }
\end{proof}
Now let for each vertex $u\in R_2(v)$, one of the sets $M_1(u)$ or
$M_2(u)$ is empty. Note that every vertex of $R_2(v)$ has a
neighbor in $R_1(v)$, otherwise $diam(G)\geq 3$. Hence,
$M_1(u)\neq\emptyset$. Consequently, $M_2(u)=\emptyset$, for each
$u\in R_2(v)$. Since $diam(G)=2$,
$M_1(u)=\Gamma_1(v)\backslash\{u\}$. Therefore, every vertex of
$R_2(v)$ is adjacent to all vertices of $R_1(v)$, $R_2(v)$ is a
clique, and $|R^*_2(v^*)|\leq 1$. We consider the cases $R_1(v)$
is homogeneous or not homogeneous, separately.
\begin{figure}[ht]\hspace{4cm}
\vspace*{5cm}\special{em:graph fig6.bmp} \vspace*{1.5cm}
\vspace{-3.2cm}
\caption{\label{5} \footnotesize
$|\Gamma^*_1(v^*)|=4$ and $|\Gamma^*_1(v^*)|=2$ in Case 2.}
\end{figure}
\begin{pro}\label{claim32}
 If for each vertex $u\in R_2(v)$, the set $M_2(u)=\emptyset$ and $R_1(v)$ is homogeneous,
 then $G^*$ satisfies the structure $G_2$.
\end{pro}
\begin{proof} { Let $H=G[\{v\}\cup R_1(v)\cup\Gamma_2(v)]$. It is
clear that $H$ is an induced subgraph of $G$ with diameter two. By
Corollary~\ref{sugraph d=2}, $n(H)-3\leq\beta(H)\leq n(H)-2$.
First suppose $\beta(H)=n(H)-2$. Theorem~\ref{n-2} yields $H$ is
one of the graphs $K_{r,s}$, $K_r\vee \overline K_s$, or
$K_r\vee(K_1\cup K_s)$. If $H=K_{r,s}$, then $R_1(v)$ and
$\Gamma_2(v)$ are independent sets. Now, for each $t\in
\Gamma_2(v)$ the set $\Gamma_1(t)$ is a nonempty independent set
in $G$, which is a contradiction with the assumption that
$\Gamma_1(t)$ is not homogeneous, for such a $t$. Consequently,
$H\neq K_{r,s}$. Note that $R_2(v)$ is a clique and all vertices
of $R_2(v)$ are adjacent to all vertices of $R_1(v)$, since for
each vertex $u\in R_2(v)$ the set $M_2(u)$ is empty. Therefore,
$R_1(v)$ is not a clique, otherwise $\Gamma_1(v)$ is homogeneous.
On the other way, if $H=K_r\vee \overline K_s$ or
$H=K_r\vee(K_1\cup K_s)$, then $R_1(v)$ is a clique. This
contradiction yields $\beta(H)=n(H)-3$.
\par Since $R_1(v)$ and $\Gamma_2(v)$ are homogeneous, the graph $H$
with its vertex $v$ satisfies the conditions of Case 1. Thus,
$H^*$ has one of the six structures $G_2$ to $G_7$. If
$R_2(v)=\emptyset$, then $\Gamma_1(v)=R_1(v)$ is homogeneous,
which is a contradiction. Therefore,  $R_2(v)\neq\emptyset$.
Hence, we have $R_2(v)$ is a clique and all its vertices are
adjacent to all vertices of $R_1(v)$, consequently, $R^*_1(v^*)$
is not a clique, otherwise $\Gamma_1(v)$ is homogeneous. Note
that, in the structures $G_3$, $G_6$, and $G_7$, $R^*_1(v^*)$ is
a clique, therefore these structures do not occur.
\par In graphs with
structure $G_5$, when both neighbors of $v^*$ have degree $3$,
$R^*_1(v^*)$ is a clique. This implies that, if $H^*$ has the
structure $G_5$, then $v^*$ is adjacent to a degree-$2$ vertex and
a degree-$3$ vertex. Here, if $t^*\in \Gamma^*_2(v^*)$ is the
degree-$2$ vertex and $t\in t^*$, then $\Gamma_1(t)$ is a clique,
contradicts the assumption that $\Gamma_1(v)$ is not homogeneous.
Therefore, $H^*$ has not the structure $G_5$.
\par When $H^*$ has the structure $G_4$, $H^*\cong C_5$. If $a^*$ is a neighbor of $v^*$, then
$R_2(v)\cup a^*$ is a resolving set for $G$ with cardinality
$n-4$. These contradictions imply that $H^*$ can only have the
structure $G_2$.
\par Let $H^*$ satisfies the structure $G_2$.
If $H^*$ has not the condition (a) of the structure $G_2$, then
the only vertex of $R^*_1(v^*)$ is of type (1K), therefore,
$R_1(v)$ is a clique or $|R_1(v)|=1$, hence $\Gamma_1(v)$ is a
clique, which is impossible. Thus, $H^*$ has the condition (a) of
the structure $G_2$, consequently, the degree-$2$ vertex of $H^*$
is of type (N), hence, $R_1(v)$ is an independent set of size at
least $2$. If $v^*$ as a vertex of $H^*$ is of type (N), then by
condition (a), the other leaf of $H^*$ is of type (K) and
consequently, $V(G)\backslash\{v,r_1,r_2,b\}$ resolves $G$, where
$v\in v^*,~r_1\in R_1(v),~r_2\in R_2(v)$, and $b\in \Gamma_2(v)$.
Therefore,  $v^*$ as a vertex of $H^*$ is of type (1K). Hence,
since all vertices of $R_2(v)$ are adjacent to all vertices of
$\Gamma_1(v)\cup\{v\}$, $v$ is twin with all vertices of $R_2(v)$.
Consequently, $v^*$ as a vertex of $G^*$ is of type (1K) and $G^*$
satisfies the structure $G_2$. }
\end{proof}
We investigate the case $R_1(v)$ is not homogeneous for two
possibility, $|\Gamma^*_2(v^*)|\geq 2$ and $|\Gamma^*_2(v^*)|=~1$,
separately.
\begin{pro}\label{claim3-3}
If for each $u\in R_2(v)$ the set $M_2(u)=\emptyset$, $R_1(v)$ is
not homogeneous, and $|\Gamma^*_2(v^*)|\geq 2$, then $G^*$
satisfies the structure $G_6$.
\end{pro}
\begin{proof}{ Since $\Gamma_2(v)$ is homogeneous,
$V(G^*)\backslash\Gamma^*_2(v^*)$ is a resolving set for $G^*$.
Hence, Proposition~\ref{dim G^*<n-t} implies that
$\Gamma^*_2(v^*)$ has at most three vertices. Note that, there
exist a vertex $x\in R_1(v)$ such that both sets $M_1(x)\cap
R_1(v)$ and $M_2(x)\cap R_1(v)$ are nonempty sets, because
$R_1(v)$ is not homogeneous. We have $M_1(x)\cap R_1(v)$ and
$M_2(x)\cap R_1(v)$ are homogeneous, because $M_1(x)$ and $M_2(x)$
are homogeneous. On the other hand, all vertices of $R_2(v)$ are
adjacent to all vertices of $R_1(v)$, since for each vertex $u\in
R_2(v)$ the set $M_2(u)$ is empty. Also, $(M^*_2(x)\cap
R^*_1(v^*))\cup\Gamma^*_2(v^*)$ resolves $M^*_1(x)\cap
R^*_1(v^*)$, because all different neighbors of the set
$M^*_1(x)\cap R^*_1(v^*)$ are in the set $(M^*_2(x)\cap
R^*_1(v^*))\cup\Gamma^*_2(v^*)$. Moreover, in the representations
of all vertices of $M^*_1(x)\cap R^*_1(v^*)$ with respect to
$\Gamma^*_2(v^*)$, at least one of the coordinates is $1$. While
all coordinates of $r(v^*|\Gamma^*_2(v^*)$ are $2$. Hence,
$(M^*_2(x)\cap R^*_1(v^*))\cup\Gamma^*_2(v^*)$ resolves
$\{v^*\}\cup(M^*_1(x)\cap R^*_1(v^*))$. Now Proposition~\ref{dim
G^*<n-t} gives $|\{v^*\}\cup(M^*_1(x)\cap R^*_1(v^*))|\leq 3$, and
consequently $|M^*_1(x)\cap R^*_1(v^*)|\leq 2$. Similarly
$|M^*_2(x)\cap R^*_1(v^*)|\leq 2$.
\par Since all neighbors of $\Gamma^*_2(v^*)$ are
in $R^*_1(v^*)$ and $\Gamma_2(v)$ is homogeneous,
$|\Gamma^*_2(v^*)|\geq 2$ implies that there exist vertices
$z^*\in R^*_1(v^*)$ and $t^*\in\Gamma^*_2(v^*)$ such that
$z^*\nsim t^*$. Let $z\in z^*$. Then $z\nsim t$, for each $t\in
t^*$. Since $z$ has  a neighbor $t^\prime\in\Gamma_2(v)$, $z$ is
adjacent to all vertices of $R_1(v)\backslash\{z\}$ or not
adjacent to all these vertices, otherwise the set $\{v,z\}$
resolves four vertices of $G$. Moreover, if
$R_1(v)\backslash\{z\}$ is not homogeneous, then there exist three
vertices $i,j,k\in R_1(v)\backslash\{z\}$ such that $j$ resolves
$\{i,k\}$, and so $\{v,z,j\}$ resolves $\{i,k,t,t^\prime\}$, which
is impossible. Thus, $R_1(v)\backslash\{z\}$ is homogeneous.
Therefore, $G[R_1(v)]\cong K_1\vee\overline{K_l}$ or $K_1\cup K_l$
for some positive integer $l\geq 2$, because $R_1(v)$ is not
homogeneous. It follows that  all vertices of
$R_1(v)\backslash\{z\}$ have a neighbor and a none-neighbor vertex
in $R_1(v)$. Hence, each vertex of $R_1(v)\backslash\{z\}$ is
adjacent or none-adjacent to all vertices of $\Gamma_2(v)$, since
$\beta(G)=n-3$. But by definition of $R_1(v)$, each vertex of
$R_1(v)$ has a neighbor in $\Gamma_2(v)$. From this, each vertex
of $R_1(v)\backslash\{z\}$ is adjacent to all vertices of
$\Gamma_2(v)$. Thus, all vertices of $R_1(v)\backslash\{z\}$ are
twins, and consequently they form a vertex $y^*$ of type (KN) in
$G^*$, and $z^*$ is a vertex of type (1) in $G^*$. Therefore,
$|R^*_1(v^*)|=2$ and $y^*$ is adjacent to all vertices of
$\Gamma^*_2(v^*)$.
\par If $|\Gamma^*_2(v^*)|=3$, then $z^*$ can
adjacent to one or two vertices of $\Gamma^*_2(v^*)$, however two
vertices of $\Gamma^*_2(v^*)$ coincide, because $\Gamma_2(v)$ is
homogeneous. Hence, $|\Gamma^*_2(v^*)|=2$. Let
$\Gamma^*_2(v^*)=\{r^*,s^*\}$. Then $G^*[V(G^*)\backslash
R^*_2(v^*)]$ is as illustrated in Figure~\ref{5}(b). If both edges
$y^*z^*$ and $r^*s^*$ do not exist, then $d(r^*,z^*)=3$, which
contradicts $diam(G^*)=2$, therefore one of them exists. Let $y\in
y^*,\-r\in r^*,\-s\in s^*$. If $y^*\sim z^*$ and $r^*\nsim s^*$,
then $y^*$ is of type (N), since $\Gamma_1(v)$ is not homogeneous.
Also, $r^*$ is of type (K), otherwise $\Gamma_1(r)=y^*$ is an
independent set, which is impossible. This this leads to the
$(n-4)$-vertex resolving set, $V(G)\backslash\{v,y,z,r\}$, for
$G$. This contradiction shows that $r^*\sim s^*$ in $G^*$.
\par If
$y^*\nsim z^*$, then $y^*$ is of type (K), since $\Gamma_1(v)$ is
not homogeneous. Moreover, $s^*$ and $r^*$ are of type (1K), since
$\Gamma_2(v)$ is homogeneous. However $\Gamma_1(r)$ is a clique,
this contradiction shows that both edges $r^*s^*$ and $y^*z^*$
exist in $G^*$. Therefore, $y^*$ is of type (N), since
$\Gamma_1(v)$ is not homogeneous. Furthermore, $r^*$ and $s^*$ are
of type (1K), because $\Gamma_2(v)$ is homogeneous. Also, since
$\Gamma_2(v)$ is a clique, $v^*$ is of type (1K). As we see
before, when $y^*$ is of type (N), the other vertices of $G^*$ are
not of type (K), hence other vertices are of type (1). Moreover,
two none-adjacent vertices are not of the same type (K).
Consequently, $G^*$ satisfies the structure $G_6$.
 }\end{proof}
\begin{pro}\label{claim3-4}
If for each $u\in R_2(v)$ the set $M_2(u)=\emptyset$, $R_1(v)$ is
not homogeneous, and $|\Gamma^*_2(v^*)|=1$, then  $G^*$ satisfies
one of the structures $G_8$ to $G_{10}$.
\end{pro}
\begin{proof}{ Since $R_1(v)$ is not homogeneous, there exist a vertex $x\in
R_1(v)$ such that $M_2(x)\neq\emptyset$. If there is no edge
between $R^*_1(v^*)\cap M^*_1(x)$ and $R^*_1(v^*)\cap M^*_2(x)$ or
all vertices of $R^*_1(v^*)\cap M^*_1(x)$ are adjacent to all
vertices of $R^*_1(v^*)\cap M^*_2(x)$, then $|R^*_1(x)|\leq 3$,
because all distinct neighbors of the vertices in these two sets
are in each other. In the same manner as the proof of
Proposition~\ref{claim3-3}, we can see that each one of the sets
$R^*_1(v^*)\cap M^*_1(x)$ and $R^*_1(v^*)\cap M^*_2(x)$ has at
most two vertices. Since $R_2(v)$ is a clique and every vertex of
it is adjacent to all vertices of $R_1(v)$, if
$R_2(v)\neq\emptyset$, then all vertices of $R_2(v)$ are twins
with $v$, and so $v^*$ is of type (K). Also, if
$R_2(v)=\emptyset$, then $v^*$ and the only vertex of
$\Gamma^*_2(v^*)$, say $w^*$, are twins. Since $w^*\nsim v^*$, one
of them is of type (K). Note that $R_2(v)=\emptyset$ shows $v^*$
is not of type (K), consequently, $w^*$ is of type (K). In this
case, by symmetry of $v^*$ and $w^*$ (see Figure~\ref{6}(a)), we
can assume that $v^*$ is of type (K), and so all vertices of
$R_2(v)$ are twins with $v$, and in consequence, without loss of
generality we can assume that $R^*_2(v^*)=\emptyset$.
\par Now let both $R^*_1(v^*)\cap M^*_1(x)$
and $R^*_1(v^*)\cap M^*_2(x)$ have two vertices $\{a^*,b^*\}$ and
$\{y^*,z^*\}$, respectively, see Figure~\ref{6}(a). Since all
distinct neighbors of $z^*$ and $y^*$ are in $\{a^*,b^*\}$,
$\{a^*,b^*\}$ resolves $\{y^*,z^*\}$. Suppose that $v_1,v_2\in
v^*,~y\in y^*,~z\in z^*$, and $w\in w^*$. Then $a^*\cup b^*$
resolves $\{y,z\}$. Also, each coordinate of $r(y|x^*)$ and
$r(z|x^*)$ is $2$. While $r(v_1|x^*)=r(w|x^*)$ is entirely $1$,
$r(v_1|v^*\backslash\{v_1\})$ is completely $1$, and all
coordinates of $r(w|v^*\backslash\{v_1\})$ are $2$. Therefore,
$V(G)\backslash \{y,z,v_1,w\}$ is a resolving set for $G$, which
is impossible. Hence, at least one of the sets $R^*_1(v^*)\cap
M^*_1(x)$ or $R^*_1(v^*)\cap M^*_2(x)$ has one vertex.
\par If $R^*_1(v^*)\cap
M^*_2(x)$ has one vertex and $R^*_1(v^*)\cap M^*_1(x)$ has two
vertices, then $a^*\neq b^*$ and $y^*=z^*$. Thus, the only
distinct neighbor of $a^*$ and $b^*$ is $y^*$, and so $y^*$ is
adjacent to exactly one of them, say $a^*$. Now
$r(v_1|\{v_2,x,a\})=(1,1,1)$, $r(y|\{v_2,x,a\})=(1,2,1)$,
$r(b|\{v_2,x,a\})=(1,1,*)$, and $r(w|\{v_2,x,a\})=(2,1,1)$, where
$*=1$ or $2$. If $*=2$, then $\beta(G)\leq n-4$, therefore $*=1$.
It follows that $x^*$ and $b^*$ are twins and they are adjacent,
hence one of them is of type (N). Since $R^*_1(v^*)\cap M^*_1(x)$
is homogeneous, $a^*\sim b^*$ gives $b^*$ is of type (1K), and so
$x^*$ is of type (N). In this case, $V(G)\backslash\{x,a,y,w\}$ is
a resolving set for $G$, a contradiction.
\par If $R^*_1(v^*)\cap
M^*_1(x)$ has one vertex and $R^*_1(v^*)\cap M^*_2(x)$ has two
vertices, then $a^*=b^*$ and $y^*\neq z^*$. Hence, $a^*$ is
adjacent to exactly one of the vertices $z^*$ and $y^*$, say
$y^*$. Thus, $V(G)\backslash \{v_1,y,z,w\}$ is a resolving set for
$G$. This contradiction yields $|R^*_1(v^*)|\leq 3$. Since
$R_1(v)$ is not homogeneous, $|R^*_1(v^*)|\geq 2$. First, let
$|R^*_1(v^*)|=3$ and $R^*_1(v^*)=\{x^*,a^*,y^*\}$. Then $G^*$ is
as Figure~\ref{6}(b). If $a^*\nsim y^*$, then $x^*$ and $a^*$ are
adjacent twins, consequently, one of them, say $x^*$, is of type
(N). This gives the resolving set, $V(G)\backslash \{v_1,x,y,w\}$
for $G$, therefore $a^*\sim y^*$, and so $x^*$ and $y^*$ are
twins, hence, one of them, say $x^*$, is of type (K). If $y^*$ is
of type (KN), then $V(G)\backslash\{a,x,y,w\}$ is a resolving set
for $G$, thus $y^*$ is of type (1). By the same argument $w^*$ is
of type (1). Note that $a^*$ is of type (1K), otherwise
$V(G)\backslash\{v_1,y,a,w\}$ is a resolving set for $G$. Thus
$G^*$ satisfies the structure of $G_{10}$.
\par When
$|R^*_1(v^*)|=2$, two cases can be happened.\vspace{4mm}\\
1. All vertices of $R_1(v)\cap M_1(x)$ are twins with $x$. In this
case $G^*\cong C_4$ and $x^*=a^*$. Since $R_1(v)\cap M_1(x)$ is
not empty, $x^*$ is of type (K). If $y^*$ is of type (KN), then
$V(G)\backslash\{v_1,x,y,w\}$ is a resolving set for $G$, hence
$y^*$ is of type (1). By the same reason $w^*$ is of type (1),
consequently $G^*$ satisfies the structure
$G_9$.\vspace{4mm}\\
2. All vertices of $R_1(v)\cap M_2(x)$ are twins with $x$. In this
case $G^*$ is the kite and $x^*=y^*$. Since $R_1(v)\cap M_2(x)$ is
not empty, $x^*$ is of type (N). If $a^*$ is of type (N), then
$V(G)\backslash\{v_1,x,a,w\}$ is a resolving set for $G$, hence
$a^*$ is of type (1K). If $w^*$ is of type (KN), then
$V(G)\backslash\{v_1,x,a,w\}$ is a resolving set for $G$. Thus,
$G^*$ satisfies the structure $G_8$, which proves the proposition.
}
\end{proof}
Now, the proof of necessity is completed.
\begin{figure}[ht]\hspace{3.7cm}
\vspace*{5cm}\special{em:graph fig7.bmp} \vspace*{1.5cm}
\vspace{-3.3cm}
\caption{\label{6}\footnotesize
$|\Gamma^*_2(v^*)|=1$ and $R_1(v)$ is not homogeneous.}
\end{figure}

\subsection{\label{Sufficiency} Proof of Sufficiency}
In this section we prove that, if $G$ is a graph of order $n$ and
$diam(G)=2$ such that $G^*$ has one of the structures $G_1$ to
$G_{10}$ in Theorem~\ref{d=2}, then $\beta(G)=n-3$. In the
following we consider each structure $G_1$ to $G_{10}$, as shown
in Figure~\ref{G^*and dim=n-3}, separately. In each case, we
assume that $i\in i^*,~j\in j^*,~c\in c^*,~p\in p^*,~q\in q^*$,
and $u\in u^*$. \vspace{4mm}\\
{$\bf G_1$}. Since $G^*$ has three
vertices, by Proposition~\ref{B(G)>n-n*}, $\beta(G)\geq n-3$. On
the other hand, by Theorem~\ref{n-2}, $\beta(G)\neq n-2$, also $G$
is not a complete graph. Therefore, $\beta(G)\leq n-3$. Hence
$\beta(G)=n-3$. \vspace{4mm}\\
{$\bf G_2$}. Similar to above, we
deduce $\beta(G)=n-3$. \vspace{4mm}\\
{$\bf G_3$}. Let $H,~H_1,~H_2$, and $H_3$ be four graphs such that
their twin graphs are as $G_3$ in Figure~\ref{G^*and dim=n-3}.
Also assume that in $H^*$ the vertex $j^*$ is of type (N) and the
other vertices are of type (1), in $H^*_1$ both $i^*$ and $u^*$
are of type (K), $j^*$ is of type (N), and $p^*$ is of type (1),
in $H^*_2$ both $j^*$ and $p^*$ are of type (N), $i^*$ is of type
(K), and $u^*$ is of type (1), in $H^*_3$, $u^*$ is of type (1)
and other vertices are of type (N).
\par Thus, $H$ is an
induced subgraph of $G$ and $G$ is an induced subgraph of $H_t$,
for some $t$, $1\leq t\leq 3$. Now we get the metric dimension of
$H$ and $H_t$'s, for $1\leq t\leq 3$. Since in $H^*$, the vertex
$j^*$ is of type (N), each resolving set for $H$ contains at least
$|j^*|-1$ vertices of $j^*$. Moreover,
$r(u|j^*\backslash\{j\})=r(i|j^*\backslash\{j\})$, hence,
$j^*\backslash\{j\}$ is not a resolving set for $H$. It is easy to
check that $(j^*\backslash\{j\})\cup\{u\}$ is a resolving set, and
so a basis of $H$. Thus, $\beta(H)=n(H)-3$. Since in $H^*_1$ the
vertices $i^*,~j^*$, and $u^*$ are not of type (1), each resolving
set for $H_1$ contains at least $|i^*|-1,\-|j^*|-1$, and $|u^*|-1$
vertices of $i^*,\-j^*$, and $u^*$, respectively. On the other
hand $r(u|i^*\cup j^* \cup u^*\backslash\{i,j,u\})=r(i|i^*\cup j^*
\cup u^*\backslash\{i,j,u\})$, therefore $i^*\cup j^* \cup
u^*\backslash\{i,j,u\}$ does not resolve $H_1$. It is easy to see
that $i^*\cup j^* \cup u^*\backslash\{j,u\}$ resolves $H_1$,
hence, it is a basis of $H_1$, and consequently
$\beta(H_1)=n(H_1)-3$. By a same argument, we can see
$\beta(H_t)=n(H_t)-3$, $2\leq t\leq 3$. Now, since
$diam(H)=diam(G)=2$, by Corollary~\ref{sugraphH<K<G}, we have
$\beta(G)=n-3$. \vspace{4mm}\\
{$\bf G_4$}. It is clear that
$\beta(C_5)=2$. \vspace{4mm}\\
{$\bf G_5$}. Let $H$ and $R$ be two graphs such that their twin
graphs are as $G_5$, all vertices of $H^*$ are of type (1) and in
$R^*$ both vertices $p^*$ and $q^*$ are of type (1) and other
vertices are of type (K). Therefore, $H$ is an induced subgraph of
$G$ and $G$ is an induced subgraph of $R$. It is clear that
$\beta(H)=2=n(H)-3$. Each resolving set for $R$ contains at least
$|i^*|-1,~|j^*|-1$, and $|u^*|-1$ vertices from $i^*,\-j^*$, and
$u^*$, respectively. Let $W=i^*\cup j^* \cup
u^*\backslash\{i,j,u\}$. Then $r(i|W)=r(j|W)=r(u|W)$, hence, $W$
is not a resolving set for $R$. Also adding one of the vertices
$i,\-j$, and $u$ to $W$, can not provide a resolving set for $R$,
because $\{i,j,u\}$ is a clique in $R$. Since $diam(R)=2$, neither
$p$ nor $q$ can not resolve more than two vertices from the set
$\{i,j,u\}$. Thus, $\beta(R)\geq |W|+2=n(R)-3$. Since $R$ is not
complete graph and none of the graphs in Theorem~\ref{n-2},
$\beta(R)\leq n(R)-3$. Hence, $\beta(R)=n(R)-3$. Since
$diam(H)=diam(G)=2$,
Corollary~\ref{sugraphH<K<G} yields $\beta(G)=n-3$. \vspace{4mm}\\
{$\bf G_6$}. Let $H,~H_1,~H_2$, and $H_3$ be four graphs with twin
graphs  as $G_6$ in Figure~\ref{G^*and dim=n-3}. Moreover, assume
that, all vertices of $H^*$ are of type (1), in $H^*_1$ the vertex
$i^*$ is of type (N) and the other vertices are of type (1), in
$H^*_2$ both $p^*$ and $q^*$ are of type (1) and the other
vertices are of type (K), in $H^*_3$ both $p^*$ and $u^*$ are of
type (1) and the other vertices are of type (K). Hence, $H$ is an
induced subgraph of $G$ and $G$ is an induced subgraph of $H_t$,
for some $t$, $1\leq t\leq 3$. It is clear that
$\beta(H)=2=n(H)-3$. Each resolving set for $H_1$ contains at
least $|i^*|-1$ vertices from $i^*$. But $r(u|i^*\backslash
\{i\})=r(j|i^*\backslash \{i\})=r(p|i^*\backslash
\{i\})=r(q|i^*\backslash \{i\})$ and there is no vertex of set
$\{u,j,p,q\}$ such that adding it to $i^*\backslash\{i\}$ provides
a resolving set for $H_1$, hence $\beta(H_1)\geq
|i^*|+1=n(H_1)-3$. Since $H_1$ is not complete graph,
Theorem~\ref{n-2} gives $\beta(H_1)\leq n(H_1)-3$, and so
$\beta(H_1)=n(H_1)-3$. Each resolving set for $H_2$ contains at
least $|i^*|-1,\-|j^*|-1$, and $|u^*|-1$ vertices from
$i^*,\-j^*$, and $u^*$, respectively. Assume $W=i^*\cup j^*\cup
u^*\backslash\{i,j,u\}$. It follows that $r(i|W)=r(j|W)=r(u|W)$,
hence $W$ does not resolve $H_2$. It is easy to check that to
provide a resolving set for $H_2$ we need to add at least two
vertices from $V(H_2)-W$ to $W$. Thus, $\beta(H_2)\geq
|W|+2=n(H_2)-3$. On the other hand, Theorem~\ref{n-2} implies
$\beta(H_2)\leq n(H_2)-3$, and consequently $\beta(H_2)=n(H_2)-3$.
By the same way, $\beta(H_3)=n(H_3)-3$. Since $diam(H)=diam(G)=2$,
Corollary~\ref{sugraphH<K<G} implies $\beta(G)=n-3$.
\vspace{4mm}\\
{$\bf G_7$}. Let $H$ and $R$ be two graphs such that their twin
graphs are as $G_7$ in Figure~\ref{G^*and dim=n-3}. Moreover,
assume that in $H^*$ the vertex $u^*$ is of type (K) and the other
vertices are of type (1), and in $R^*$ both vertices $p^*$ and
$q^*$ are of type (1) and other vertices are of type (K).
Therefore, $H$ is an induced subgraph of $G$ and $G$ is an induced
subgraph of $R$. It is clear that each resolving set for $H$
contains at least $|u^*|-1$ vertices of $u^*$. Moreover,
$r(u|u^*\backslash\{u\})=r(i|u^*\backslash\{u\})=r(j|u^*\backslash\{u\})$
and to provide a resolving set for $H$, we must add at least two
vertices from the set $\{u,i,j,p,q\}$ to $u^*\backslash\{u\}$,
hence $\beta(H)\geq |u^*|+1=n(H)-3$. Also by Theorem~\ref{n-2},
$\beta(H)\leq n(H)-3$, thus $\beta(H)=n(H)-3$. Since $i^*,\-j^*$,
and $u^*$ are not of type (1) in $R^*$, each resolving set for $R$
contains at least $|i^*|-1,~|j^*|-1$, and $|u^*|-1$ vertices of
$i^*,\-j^*$, and $u^*$, respectively. For $W=i^*\cup j^* \cup
u^*\backslash\{i,j,u\}$, we have $r(i|W)=r(j|W)=r(u|W)$, hence $W$
is not a resolving set for $R$. To provide a resolving set for
$R$, we need to add at least two vertices from the set
$\{u,i,j,p,q\}$ to $W$, and consequently, $\beta(R)\geq
|W|+2=n(R)-3$. Theorem~\ref{n-2} shows that $\beta(R)\leq n(R)-3$,
hence $\beta(R)=n(R)-3$. Since $diam(H)=diam(G)=2$,
Corollary~\ref{sugraphH<K<G} yields $\beta(G)=n-3$. \vspace{4mm}\\
{$\bf G_8$}. Let $H$ and $R$ be two graphs such that their twin
graphs are as $G_8$ in Figure~\ref{G^*and dim=n-3}, where in $H^*$
both $j^*$ and $p^*$ are of type (1), $u^*$ is of type (K) and
$i^*$ is of type (N), and in $R^*$ both vertices $j^*$ and $u^*$
are of type (K), $i^*$ is of type (N) and $p^*$ is of type (1).
Then $G$ is one of the graphs $H$ or $R$. Theorem~\ref{n-2} shows
that $\beta(H)\leq n(H)-3$ and $\beta(R)\leq n(R)-3$. Note that
each resolving set for $H$ contains at least $|u^*|-1$ and
$|i^*|-1$ vertices from $u^*$ and $i^*$, respectively. If
$S=(i^*\cup u^*)\backslash\{i,u\}$, then $r(u|S)=r(j|S)$.
Therefore, $\beta(H)\geq |S|+1=n(H)-3$, and so $\beta(H)=n(H)-3$.
It is clear that every resolving set for $R$ contains at least
$|i^*|-1,~|j^*|-1$, and $|u^*|-1$ vertices of $i^*,~j^*$, and
$u^*$, respectively. Let $W=i^*\cup j^* \cup
u^*\backslash\{i,j,u\}$. Then $r(j|W)=r(u|W)$, hence $W$ is not a
resolving set for $R$, and so $\beta(R)\geq |W|+1=n(R)-3$. It
follows that $\beta(R)=n(R)-3$. Consequently, $\beta(G)=n(G)-3$.
\vspace{4mm}\\
{$\bf G_9$}. Let $G^*$ be as $G_9$ in
Figure~\ref{G^*and dim=n-3}, where $i^*$ and $u^*$ are of type (K)
in $G^*$, and the other vertices are of type (1). Each resolving
set for $G$ contains at least $|i^*|-1$ and $|u^*|-1$ vertices of
$i^*$ and $u^*$, respectively. For $W=i^*\cup
u^*\backslash\{i,u\}$, we have $r(i|W)=r(u|W)$, hence $W$ is not a
resolving set for $G$, and so $\beta(G)\geq |W|+1=n(G)-3$.
Theorem~\ref{n-2} implies $\beta(G)\leq n(G)-3$. Consequently,
$\beta(G)=n(G)-3$. \vspace{4mm}\\
{$\bf G_{10}$}. Let $H$ and $R$ be two graphs such that their twin
graphs are as $G_{10}$ in Figure~\ref{G^*and dim=n-3},
furthermore, in $H^*$ both $u^*$ and $c^*$ are of type (K) and
other vertices are of type (1), and in $R^*$ both vertices $i^*$
and $p^*$ are of type (1) and other vertices are of type (K). Then
$G$ is one of the graphs $H$ or $R$. Theorem~\ref{n-2} shows that
$\beta(H)\leq n(H)-3$ and $\beta(R)\leq n(R)-3$. Note that each
resolving set for $H$ contains at least $|u^*|-1$ and $|c^*|-1$
vertices from $u^*$ and $c^*$, respectively. If $S=c^*\cup
u^*\backslash\{c,u\}$, then $r(u|S)=r(j|S)=r(c|S)$, therefore $S$
does not resolve $H$. To provide a resolving set for $H$, we need
to add at least two vertices from the set $\{u,i,j,c,p\}$ to $S$,
and so $\beta(H)\geq |S|+2=n(H)-3$, hence $\beta(H)=n(H)-3$. It is
clear that every resolving set for $R$ contains at least
$|u^*|-1,\-|j^*|-1$, and $|c^*|-1$ vertices from $u^*,\-j^*$, and
$c^*$, respectively. Let $W=u^*\cup j^* \cup
c^*\backslash\{u,j,c\}$. Hence, $r(u|W)=r(j|W)=r(c|W)$, therefore
$W$ is not a resolving set for $R$. Clearly, to provide a
resolving set for $R$, we must add at least two vertices from the
set $\{u,i,j,c,p\}$ to $W$, hence $\beta(R)\geq |W|+2=n(R)-3$,
thus $\beta(R)=n(R)-3$. Consequently, $\beta(G)=n(G)-3$.\\ This
completes the proof of the sufficiency of
Theorem~\ref{d=2}.\hfill{$\rule{2mm}{2mm}$}

\end{document}